  \newcommand{\IR}{\ensuremath\mathds{R}}                        
\newcommand*{\bftheta}{\vec{\theta}}
\newcommand*{\setE}{\ensuremath{\mathcal{T}}}                    
\newcommand*{\Gammah}{\Gamma_h}                                  
\renewcommand*{\vec}[1]{{\boldsymbol{#1}}}                       
\DeclareMathAlphabet{\mathbfsf}{\encodingdefault}{\sfdefault}{bx}{n}
\newcommand*{\vecc}[1]{\mathbfsf{#1}}                            
\newcommand*{\transpose}[1]{{#1}^\mathrm{T}}                     
\newcommand*{\normal}{\vec{n}}                                   
\newcommand*{\grad}{\vec{\nabla}}                                
\renewcommand*{\div}{\vec{\nabla}\cdot}                          
\newcommand*{\laplace}{\upDelta}                                 
\newcommand*{\llbrace}{\lbrace\hspace*{-0.18em}\vert}
\newcommand*{\rrbrace}{\vert\hspace*{-0.18em}\rbrace}
\newcommand*{\avg}[1]{\llbrace{#1}\rrbrace}                      
\newcommand*{\jump}[1]{\left\llbracket{#1}\right\rrbracket}      
\newcommand*{\abs}[1]{\ensuremath{|#1|}}                         
\newcommand*{\norm}[2]{\|#1\|_{#2}}                              
\newcommand*{\on}[2]{\left.#1\right\vert_{#2}}                   
\newcommand{\upwind}[1]{#1^{\uparrow}}                           
\newcommand*{\Rey}{\mathrm{Re}}                                  
\newcommand*{\Ca}{\mathrm{Ca}}                                   
\newcommand*{\Pe}{\mathrm{Pe}}                                   
\newcommand*{\Cn}{\mathrm{Cn}}                                   
  \newcolumntype{R}{>{\raggedleft\arraybackslash}X}
  \newcolumntype{L}{>{\raggedright\arraybackslash}X}
  \newcolumntype{C}{>{\centering\arraybackslash}X}
\newtheorem{theorem}{Theorem}
\newtheorem{lemma}{Lemma}
\newtheorem{remark}{Remark}
\crefname{figure}{Fig.}{Figs.}
\Crefname{figure}{Figure}{Figures}
\crefname{section}{Sec.}{Secs.}
\Crefname{section}{Section}{Sections}
\crefname{table}{Tab.}{Tables}
\Crefname{table}{Table}{Tables}
\crefname{theorem}{Thm.}{Thms.}
\Crefname{theorem}{Theorem}{Theorems}
\title{A pressure-correction and bound-preserving discretization of the phase-field method for
variable density two-phase flows}
\author[a]{Chen Liu} 
\author[b]{Deep Ray} 
\author[a]{Christopher Thiele}
\author[a]{Lu Lin}
\author[a]{Beatrice Riviere\footnote{Corresponding author: Beatrice Riviere (riviere@rice.edu)}}
\affil[a]{Rice University, Department of Computational and Applied Mathematics, 6100 Main Street, Houston, TX 77005.}
\affil[b]{Department of Aerospace and Mechanical Engineering, University of Southern California, Los Angeles, CA, 90089.}
\date{\vspace{-0.3cm}October 29, 2020}
\begin{document}
\maketitle

\vspace{-0.75cm}\noindent\hrulefill\vspace{-0.5cm}
\section*{Abstract}
In this paper, we present an efficient numerical algorithm for solving the time-dependent Cahn--Hilliard--Navier--Stokes  equations that model the flow of two phases with different densities. The pressure-correction step in the projection method consists of a Poisson problem with a modified right-hand side. Spatial discretization is based on discontinuous Galerkin methods with piecewise linear or piecewise quadratic polynomials. Flux and slope limiting techniques successfully eliminate the bulk shift, overshoot and undershoot in the order parameter, which is shown to be bound-preserving. Several numerical results demonstrate that the proposed numerical algorithm is effective and robust for modeling two-component immiscible flows in porous structures and digital rocks. 
\\ \\
\emph{Keywords:} phase-field, pressure-correction projection, discontinuous Galerkin, flux limiters, slope limiters, Berea sandstone, digital rock
\par\noindent\hrulefill

%
\section{Introduction}
Phase-field models are popular mathematical models for multiphase problems, and they have been applied in fluid mechanics, hydro-geophysics and petroleum engineering. In this paper, we formulate a numerical method for solving the phase-field model for the flow of two phases with different densities at the pore scale. Because phase-field methods are based on the minimization of the Helmholtz free energy of the system, these methods have the advantage of implicitly tracking the interface and handling the motion of the contact line between the two phases and the solid boundary. Other popular methods for modeling two-phase flows at the pore scale include lattice Boltzmann methods and pore network models \cite{leclaire2017generalized,raeini2019validation}; see the surveys \cite{Kim2012,Shen2012} for pros and cons of these methods.  The underlying equations for phase-field methods are the coupled Cahn--Hilliard and Navier--Stokes (CHNS) equations, with unknowns order parameter, chemical potential, velocity and pressure. 
The study of the CHNS equations has recently received a lot of attention (see for instance \cite{Feng2006,bao2012finite,diegel2017convergence,LiuRiviere2018numericalCHNS} and references therein).  
\par
In this paper, we discretize the CHNS equations to  model the propagation of phases with different densities in
digital rock structures.  We propose a scheme that uses discontinuous Galerkin (DG) methods and that approximates the
unknowns with piecewise linear and piecewise quadratic polynomials in space. 
Velocity and pressure are decoupled according to the pressure-correction projection method. 
We refer to \cite{guermond2006overview} and the papers therein for an overview of the projection methods for the time-dependent incompressible Stokes equations. Because the density ratio of the phase densities is not equal to one, the pressure correction step requires the solution of an elliptic problem with  variable coefficients. This classical approach has two consequences on the stability and efficiency of the algorithm. First, the variable coefficient is a function of the order parameter and it may become negative if the numerical approximations of the order parameter are not bound-preserving. 
Second, the matrix assembly of the elliptic problem happens at each time step, which  can severely impact  the cost of the method for large size problems. Therefore in this work, following \cite{dong2012time,dodd2014fast}, we propose to replace the standard elliptic problem for the pressure correction step by a Poisson problem with a modified right-hand side.
This yields a stable system that only needs one matrix assembly throughout the whole simulation.
\par
The order parameter is a scalar field that can be viewed as a phase indicator since it takes a constant value in one bulk phase (phase $\mathrm{A}$) and another constant value in the other bulk phase (phase $\mathrm{B}$). If the order parameter is defined as the difference between mass fractions, it takes the value $+1$ in phase $\mathrm{A}$ and $-1$ in phase $\mathrm{B}$. The order parameter continuously increases from $-1$ to $+1$ across the diffuse interface between the two phases. Thus, in the context of incompressible immiscible two-phase flows, the physically admissible range of the order parameter is the interval $[-1,\,+1]$. However, the combination of using constant mobility and discontinuous piecewise polynomials of degree greater than or equal to one, produces numerical solutions that do not automatically satisfy a maximum principle in general. 
The phenomena of bulk shift, overshoot and undershoot have been observed for the advective Cahn--Hilliard equations \cite{LeeMunchSuli,FLAR2018finite};
they can be reduced by carefully selecting mesh sizes, time step values, and penalty values in the DG discrete forms. 
Thus, in this work, we propose two post-processing techniques to eliminate bulk shift, overshoot and undershoot with respect 
to the order parameter. First, we apply flux limiters that
produce a bound-preserving cell-averaged order parameter \cite{frank2019bound,kuzminmoller2005}. Second, we apply vertex-based slope limiting
techniques \cite{kuzmin2010vertex,kuzmin2013slope}.  We  highlight the differences between   the present work and \cite{LFTAR2018Efficient}, 
where we solved the phase-field model for two phases with equal densities. In addition to the different discretization of the
pressure-correction step, the discretization of the nonlinear reaction term in the momentum equation
also differs to take into account the varying density field and to produce a stable solution.  Finally, neither flux nor slope limiting techniques were employed in \cite{LFTAR2018Efficient} and the unknowns were approximated by discontinuous piecewise linears only.

\par
The outline of this paper is as follows. The mathematical model is introduced in~\Cref{sec:ModelProblem}, as well as boundary conditions that include a wettability model. The fully discrete numerical algorithm is defined in~\Cref{sec:numerical_scheme}. Several numerical examples are described in~\Cref{sec:numerical_experiments}, with concluding remarks in~\Cref{sec:conclusion}.

\section{Mathematical Model}\label{sec:ModelProblem}
We consider the flow of two immiscible phases in an open bounded polygonal domain $\Omega\!\subset\!\IR^3$ over the time interval $(0,T)$. The mathematical model for the CHNS system is written in a non-dimensional form and has four unknowns, namely the  order parameter $c$, chemical potential $\mu$, velocity $\vec{v}$, and pressure $p$.
\begin{subequations}\label{eq:CHNS:model_ndim}
\begin{align}
\partial_t c - \frac{1}{\Pe}\laplace{\mu} + \div{(c\vec{v})} &= 0 && \text{in}~(0,T)\times\Omega,\label{eq:CHNS:model_ndim:a}\\
\mu &= \Phi'(c) - \Cn^2\,\laplace{c} && \text{in}~(0,T)\times\Omega,\label{eq:CHNS:model_ndim:b}\\
\partial_t(\rho\vec{v}) \!+\! \div{(\rho\vec{v}\otimes\vec{v})} \!-\! \frac{1}{\Rey}\laplace{\vec{v}} &= -\frac{1}{\Rey\Ca}\grad{p} \!+\! \frac{3}{2\sqrt{2}\,\Rey\Ca\Cn}\mu\grad{c}\hspace{-0.2cm} && \text{in}~(0,T)\times\Omega,\label{eq:CHNS:model_ndim:c}\\
\div{\vec{v}} &= 0 && \text{in}~(0,T)\times\Omega.\label{eq:CHNS:model_ndim:d}
\end{align}
\end{subequations}
The dimensionless parameters $\Pe$, $\Cn$, $\Rey$, and $\Ca$ in \cref{eq:CHNS:model_ndim} are the P\'eclet number, Cahn number, Reynolds number, and capillary number, respectively. \Cref{eq:CHNS:model_ndim:a}-\eqref{eq:CHNS:model_ndim:b} form the advective
Cahn--Hilliard equations whereas \cref{eq:CHNS:model_ndim:c}-\cref{eq:CHNS:model_ndim:d} form the incompressible Navier--Stokes equations, which are coupled to the Cahn--Hilliard equations via the density $\rho$ that is a function of $c$ (see \cref{eq:CHNS:linear_mixing}), and the capillary stress forces $\mu \nabla c$. 
We employ the classical Ginzburg--Landau double well potential, which is written
as a sum of a convex function, $\Phi_+$, and a concave function, $\Phi_-$:
\begin{equation}\label{eq:CHNS:GL_potential}
\Phi(c) = \Phi_{+}(c) + \Phi_{-}(c), \quad
\Phi_{+}(c) = \frac{1}{4}(1+c^4), \quad
\Phi_{-}(c) = -\frac{1}{2}c^2.
\end{equation}
The (dimensionless) density of the bulk phase $\mathrm{A}$ (resp. $\mathrm{B}$) is a positive constant value $\rho_\mathrm{A}$ (resp. $\rho_\mathrm{B}$). Using the linear mixing rule, the density of the fluid varies continuously within the diffuse interface between the two bulk phases:
\begin{equation}\label{eq:CHNS:linear_mixing}
\rho(c) = \frac{1+c}{2}\rho_\mathrm{A} + \frac{1-c}{2}\rho_\mathrm{B}.
\end{equation}
Let $\normal$ be the unit outward  normal vector to the boundary $\partial\Omega$.  
The boundary of the computational domain is decomposed into three disjoint subsets, i.\,e., $\partial\Omega = \partial\Omega^{\mathrm{wall}}\cup\partial\Omega^{\mathrm{in}}\cup\partial\Omega^{\mathrm{out}}$. Here, 
$\partial\Omega^{\mathrm{wall}}$ denotes the solid boundary of the pore space, 
$\partial\Omega^{\mathrm{in}}$ denotes the inflow boundary, and $\partial\Omega^{\mathrm{out}}$ is the outflow boundary. 
\begin{equation*}
\partial\Omega^{\mathrm{in}} = \{\vec{x}\in\partial{\Omega}:\vec{v}\cdot\normal<0\}, \quad 
\partial\Omega^{\mathrm{out}} = \partial\Omega\setminus(\partial\Omega^{\mathrm{wall}}\cup\partial\Omega^{\mathrm{in}}).
\end{equation*}
The system \eqref{eq:CHNS:model_ndim} is completed by the following initial and boundary conditions:
\begin{subequations}\label{eq:CHNS:ICBC}
\begin{align}
c &= c^0 && \text{on}~\{0\}\times\Omega,\label{eq:CHNS:ICBC_1}\\
\vec{v} &= \vec{v}^0 && \text{on}~\{0\}\times\Omega,\label{eq:CHNS:ICBC_2}\\
c &= c_\mathrm{D} && \text{on}~(0,\,T)\times\partial\Omega^{\mathrm{in}},\label{eq:CHNS:ICBC_3}\\
\grad{c}\cdot\normal &= -\frac{\sqrt{2}\,\delta\, \cos(\theta)}{2\,\Cn}\,(c^2-1) && \text{on}~(0,\,T)\times(\partial\Omega^{\mathrm{wall}}\cup\partial\Omega^{\mathrm{out}}),
\label{eq:CHNS:ICBC_4}\\
\grad{\mu}\cdot\normal &= 0 && \text{on}~(0,\,T)\times\partial\Omega,\label{eq:CHNS:ICBC_5}\\
\vec{v} &= \vec{v}_\mathrm{D} && \text{on}~(0,\,T)\times\partial\Omega^{\mathrm{in}},\label{eq:CHNS:ICBC_6}\\
\vec{v} &= \vec{0} && \text{on}~(0,\,T)\times\partial\Omega^{\mathrm{wall}},\label{eq:CHNS:ICBC_7}\\
(\grad{\vec{v}} - \frac{1}{\Ca}p\vecc{I})\normal &= \vec{0} && \text{on}~(0,\,T)\times\partial\Omega^{\mathrm{out}}.\label{eq:CHNS:ICBC_8}
\end{align}
\end{subequations}
The values of the order parameter and velocity are prescribed on the inflow boundary by $c_\mathrm{D}$ and $\vec{v}_\mathrm{D}$ respectively. While these values can in general vary in time, we assume for  simplicity that they are independent of time in the remainder of the paper.
Wettability is modeled by a  user-specified 
contact angle $\theta$ that is enforced by the Neumann boundary condition \cref{eq:CHNS:ICBC_4}.  The input parameter $\delta$ is a 
scalar field that is equal to the constant one for smooth solid boundaries only and that otherwise corrects the numerical impact
of the jaggedness of the solid boundaries obtained from micro-CT scanning. The derivation of
this boundary condition and the wettability model can be found in \cite{FLSAR2017energy}.
\begin{remark}
It is well known that for any closed system ($\partial{\Omega}^{\mathrm{in}} = \emptyset$ and $\partial{\Omega}^{\mathrm{out}} = \emptyset$), the CHNS model \eqref{eq:CHNS:model_ndim} enjoys the global mass conservation property. Let $\vert \Omega\vert $ denote the
volume of $\Omega$.  We have the following identity for the order parameter:
\begin{equation*}
\frac{1}{\abs{\Omega}}\int_\Omega c(t) = \frac{1}{\abs{\Omega}}\int_\Omega c^0 = \bar{c}^0, \quad \forall t\in (0,T).
\end{equation*}
Furthermore, with the linear mixing rule \cref{eq:CHNS:linear_mixing}, we have the mass conservation identity
\begin{equation*}
\frac{1}{\abs{\Omega}} \int_\Omega \rho\big(c(t)\big) 
= \frac{1}{\abs{\Omega}} \int_\Omega \rho(c^0) 
= \frac{\rho_\mathrm{A}+\rho_\mathrm{B}}{2} + \frac{\rho_\mathrm{A}-\rho_\mathrm{B}}{2}\bar{c}^0,
\quad \forall t\in (0,T).
\end{equation*}
In \Cref{sec:numerical_experiments:spinodal} and \Cref{sec:numerical_experiments:merge_drop}, we verify this property for the discrete solution in closed systems.
\end{remark}

\section{Numerical Scheme}\label{sec:numerical_scheme}
In this section, we formulate a numerical method for solving \cref{eq:CHNS:model_ndim}-\cref{eq:CHNS:ICBC} that employs discontinuous Galerkin methods in space and operator splittings for the flow equations. 


\subsection{Time discretization}
Simulations of two-phase flows in digital rock require efficient numerical methods that are scalable on parallel clusters because the linear systems are very large, i.\,e., of the order $10^7-10^9$ unknowns. To decrease the size of the linear systems, the advective Cahn--Hilliard equations are decoupled from the Navier--Stokes equations. The pressure constraint is also decoupled from the incompressibility condition in the Navier--Stokes equations, according to the class of pressure projection methods, which are widely used for large scale computing.
\par
Uniformly partition the time interval $[0,\,T]$ into $N_T$ subintervals and let $\tau$ denote the time step length. 
For any $1 \leq n \leq N_T$, at each time step $t^n = n\tau$, we propose a semi-discrete in time scheme that consists of five steps:
\par
Step~1. Given $(c^{n-1},\vec{u}^{n-1})$, compute $(c^n,\mu^n)$ such that
\begin{subequations}\label{eq:CHNS:time_dis_const1}
\begin{align}
c^n \!- \frac{\tau}{\Pe}\laplace{\mu^n}\! + \tau\div(c^n\vec{u}^{n-1}) &= c^{n-1} && \text{in}~\Omega,\\
-\Cn^2\,\laplace{c^n} - \mu^n +\Phi_+\,\!'(c^n) &= -\,\Phi_-\,\!'(c^{n-1}) && \text{in}~\Omega,\\
c^n &= c_\mathrm{D} && \text{on}~\partial\Omega^{\mathrm{in}},\\
\grad{c^n}\cdot\normal &= -\frac{\sqrt{2}\, \delta \cos(\theta)}{2\,\Cn}\big((c^{n-1})^2-1\big)\!\!\! && \text{on}~\partial\Omega^{\mathrm{wall}}\cup\partial\Omega^{\mathrm{out}}\!,\\
\grad{\mu^n}\cdot\normal &= 0 && \text{on}~\partial\Omega.
\end{align}
\end{subequations}
\par
Step~2. Given $c^n$, compute $\rho^n$ 
\begin{equation}\label{eq:CHNS:time_dis_const2}
\rho^n = \frac{1+c^n}{2}\rho_\mathrm{A} + \frac{1-c^n}{2}\rho_\mathrm{B}.
\end{equation}
\par
Step~3. Given $(c^n, \mu^n, \rho^{n-1}, \rho^n, \vec{v}^{n-1}, p^{n-1}, \phi^{n-1})$, compute $\vec{v}^n$ such that
\begin{subequations}\label{eq:CHNS:time_dis_const3}
\begin{align}
\rho^n\vec{v}^n \!+\! \tau\div(\rho^n\vec{v}^{n-1}\otimes\vec{v}^n) - \frac{\tau}{\Rey}\,\laplace{\vec{v}^n} &= \rho^{n-1}\vec{v}^{n-1} 
\nonumber\\
-\frac{\tau}{\Rey\,\Ca}\,\grad{(p^{n-1}+\phi^{n-1})} 
&+ \frac{3\tau}{2\sqrt{2}\,\Rey\,\Ca\,\Cn}\mu^n\grad{c^n}\hspace{-0.35cm} && \text{in}~\Omega,\\
\vec{v}^n &= \vec{v}_\mathrm{D} && \text{on}~\partial\Omega^{\mathrm{in}},\\
\vec{v}^n &= \vec{0}  && \text{on}~\partial\Omega^{\mathrm{wall}},\\
(\grad{\vec{v}^n} - \frac{1}{\Ca}\,p^{n-1}\vecc{I})\normal &= \vec{0} && \text{on}~\partial\Omega^{\mathrm{out}}.
\end{align}
\end{subequations}
\par
Step~4. Given $(\rho^n, \vec{v}^n, \phi^{n-1})$, compute $\phi^n$ such that
\begin{subequations}\label{eq:CHNS:time_dis_const4}
\begin{align}
-\laplace{\phi^n} &= -\div{\Big((1-\frac{\rho_\mathrm{ref}}{\rho^{n}})\grad{\phi^{n-1}}\Big)} -\frac{\Rey\,\Ca\,\rho_\mathrm{ref}}{\tau}\,\div{\vec{v}^n} && \text{in}~\Omega,\\
\grad{\phi^n}\cdot\normal &= 0 && \text{on}~\partial\Omega^{\mathrm{in}}\cup\partial\Omega^{\mathrm{wall}},\\
\phi^n &= 0 && \text{on}~\partial\Omega^\mathrm{out}.
\end{align}
\end{subequations}
\par
Step~5. Given $(\vec{v}^{n}, p^{n-1}, \phi^{n-1}, \phi^n)$, compute $(p^n, \vec{u}^n)$ such that
\begin{subequations}\label{eq:CHNS:time_dis_const5}
\begin{align}
p^n &= p^{n-1} + \phi^n - \frac23 \Ca\,\div{\vec{v}^n},\\
\vec{u}^n &= \vec{v}^n - \frac{\tau}{\Rey\,\Ca\, \rho_\mathrm{ref}}\,\Big(\grad{\phi^n} + (\frac{\rho_\mathrm{ref}}{\rho^n} - 1)\grad{\phi^{n-1}}\Big).
\end{align}
\end{subequations}
In \cref{eq:CHNS:time_dis_const1}, we solve the advective Cahn--Hilliard equations by time-lagging the velocity. The convex part of the double well potential is evaluated implicitly in time whereas the concave part is evaluated at the previous time; this choice is known to yield a stable numerical solution of the order parameter \cite{eyre1998unconditionally}.  
In \cref{eq:CHNS:time_dis_const2}, we compute the density of the fluid, which varies in time and in space
as the location of the diffuse interface between the bulk phases changes. 
The \cref{eq:CHNS:time_dis_const3}-\cref{eq:CHNS:time_dis_const5} correspond to the temporal discretization of the incompressible Navier--Stokes equations, following a pressure-correction approach.  \Cref{eq:CHNS:time_dis_const3} solves for a velocity that satisfies the boundary conditions of the original problem. In \cref{eq:CHNS:time_dis_const4}, a potential function is obtained by solving a Poisson problem.  Finally \cref{eq:CHNS:time_dis_const5} updates the pressure and the divergence-free velocity field. 
The initial condition for the order parameter is $c^0$ (see \cref{eq:CHNS:ICBC_1}).  The algorithm needs additional initial conditions
because of the operator splittings. We choose $p^0 = 0$, $\phi^0 = 0, \rho^0 = \rho(c^0)$, and $\vec{u}^0 = \vec{v}^0$, where $\vec{v}^0$ is the initial velocity (see \cref{eq:CHNS:ICBC_2}).
\begin{remark}
Our formulation of \cref{eq:CHNS:time_dis_const4,eq:CHNS:time_dis_const5} is different from the standard pressure projection algorithm. Indeed, the standard method uses an elliptic problem with variable coefficient for the potential 
(namely $-\div{\big(\frac{1}{\rho^n}\grad{\phi^n}\big)}$) whereas our formulation uses a Poisson operator and modified right-hand side, which is based on the works \cite{dodd2014fast,dong2012time}. We select a reference density $\rho_\mathrm{ref}$ that is a positive constant. In our numerical results, we choose
\[
\rho_\mathrm{ref} = \min(\rho_\mathrm{A},\,\rho_\mathrm{B}).
\]
There are several advantages in using a Poisson problem in \cref{eq:CHNS:time_dis_const4}. 
The matrix in the linear system remains constant throughout the simulation and only the right-hand side of the linear system changes at each time step. This has significant impact on the computational cost as efficient solvers can be employed and the matrix (and preconditioner) assembly is performed at the first time step only.
For comparison, we present below Step 4 and 5 of the standard pressure projection algorithm:
\par
Step~4. (standard) Given $(\rho^n, \vec{v}^n)$, compute $\phi^n$ such that
\begin{subequations}\label{eq:CHNS:time_dis_standard4}
\begin{align}
-\div{\big(\frac{1}{\rho^n}\grad{\phi^n}\big)} &= -\frac{\Rey\,\Ca}{\tau}\,\div{\vec{v}^n} && \text{in}~\Omega,\label{eq:CHNS:time_dis_const4a}\\
\grad{\phi^n}\cdot\normal &= 0 && \text{on}~\partial\Omega^{\mathrm{in}}\cup\partial\Omega^{\mathrm{wall}},\label{eq:CHNS:time_dis_const4b}\\
\phi^n &= 0 && \text{on}~\partial\Omega^\mathrm{out}.\label{eq:CHNS:time_dis_const4c}
\end{align}
\end{subequations}
\par
Step~5. (standard) Given $(\vec{v}^{n}, p^{n-1}, \phi^n)$, compute $(p^n, \vec{u}^n)$ by the formula
\begin{subequations}\label{eq:CHNS:time_dis_standard5}
\begin{align}
p^n &= p^{n-1} + \phi^n - \frac{2}{3} \Ca\,\div{\vec{v}^n},\\
\vec{u}^n &= \vec{v}^n - \frac{\tau}{\Rey\,\Ca}\,\frac{1}{\rho^n}\grad{\phi^n}.
\end{align}
\end{subequations}
More importantly, coercivity of the discrete system for \cref{eq:CHNS:time_dis_const4} is always guaranteed, independently of the density $\rho^n$.  Because of the non-equal bulk densities and  \cref{eq:CHNS:time_dis_const2}, small overshoots or undershoots in the order parameter may produce non-physical negative densities, which will yield a loss of coercivity for the standard algorithm (see \cref{eq:CHNS:time_dis_standard4}). With the proposed approach, overshoots and undershoots of the
order parameter will not have an impact on the invertibility of the system.
\end{remark}
\begin{remark}
For the case of equal densities ($\rho_\mathrm{A} = \rho_\mathrm{B} = \rho_\mathrm{ref} = \rho^{0}$), 
\cref{eq:CHNS:time_dis_const4,eq:CHNS:time_dis_const5} reduce to the standard pressure-correction approach, namely \cref{eq:CHNS:time_dis_standard4,eq:CHNS:time_dis_standard5}. 
\end{remark}
\subsection{Fully discrete scheme}\label{sec:discretescheme}
Let $\setE_h = \{E_k\}$ be a partition of $\Omega$ where all the elements are cubes of the same size. The choice of cubic elements is well suited for micro-CT images of the rock because images of the pore space are themselves collections of (cubic) voxels. Let $h$ denote the maximum element diameter and let $\Gammah$ be the set of interior faces. For each interior face $e \in \Gammah$ shared by elements $E_{k^-}$ and $E_{k^+}$, with $k^-<k^+$, we define a unit normal vector $\normal_e$ that points from $E_{k^-}$ into $E_{k^+}$. For a boundary face, $e \subset \partial\Omega$, the normal vector $\normal_e$ is taken to be the unit outward vector to $\partial\Omega$. 
The scalar and vector unknowns belong to the spaces $X_h$ and $\vec{X}_h$ respectively. These discrete spaces  consist of discontinuous piecewise polynomials of degree $r \geq 1$:
\[
X_h =\{ \chi_h \in L^2(\Omega):\, \chi_h|_{E_k}\in\mathbb{Q}_r(E_k), \quad \forall E_k\in\setE_h\}, \quad
\vec{X}_h = (X_h)^3.
\]
The average $\avg{\chi}$ and jump $\jump{\chi}$ for any scalar function $\chi$ on boundary faces are defined to be its trace; and on interior faces they are defined by
\[
\avg{\chi}|_e = \frac{1}{2}\on{\chi}{E_{k^-}} + \frac{1}{2}\on{\chi}{E_{k^+}}, \quad
\jump{\chi}|_e = \on{\chi}{E_{k^-}} - \on{\chi}{E_{k^+}}, \quad 
\forall e = \partial E_{k^{-}}\cap\partial E_{k^{+}}.
\]
The $L^2$ inner-product on $\Omega$ (resp. on any face $e$) is denoted by $(\cdot,\cdot)$ (resp. $(\cdot,\cdot)_e$). We also make use of the following compact notation for the $L^2$ inner-product on the interior and boundary edges:
\[
(\cdot,\cdot)_{\mathcal{O}} = \sum_{e\in\mathcal{O}} (\cdot,\cdot)_e, \quad 
\mathcal{O} = \Gammah,~\partial\Omega,~\partial\Omega^\mathrm{in},~\partial \Omega^\mathrm{out}.
\]
The piecewise gradient (also called the broken gradient) is denoted by $\vec{\nabla}_h$. We now present the fully discrete scheme by first describing all the steps and then by defining the discrete forms. For each $n$, the scalar unknowns $c_h^n, \mu_h^n, \rho_h^n, p_h^n, \phi_h^n$ belong to the discrete space $X_h$ whereas the vector unknowns $\vec{u}_h^n, \vec{v}_h^n$ belong to $\vec{X}_h$.
\par
Input: The scalar functions $c_h^{n-1},\,\mu_h^n,\,\rho_h^{n-1},\,p_h^{n-1},\,\phi_h^{n-1}\,$ are given in $\,X_h\,$ and the 
\newline \hspace*{10.75ex} 
vector functions $\vec{u}_h^{n-1},\,\vec{v}_h^{n-1}$ are given in $\vec{X}_h$.
\par
Step~1. Compute $(c_h^n,\mu_h^n)$ such that for all $\chi_h\in X_h$
\begin{align}
(c_h^n,\chi_h) + \frac{\tau}{\Pe} a_\mathrm{diff}(\mu_h^n,\chi_h) &+ \tau a_\mathrm{adv}(c_h^n,\vec{u}_h^{n-1},\chi_h)\nonumber\\
&= (c_h^{n-1},\chi_h) - \tau(c_\mathrm{D}\, \vec{u}_h^{n-1}\cdot\vec{n_e},\chi_h)_{\partial\Omega^\mathrm{in}},\label{eq"disc1}\\
\Cn^2\,a_{\mathrm{diff},\partial\Omega^\mathrm{in}}(c_h^n,\chi_h) &- (\mu_h^n,\chi_h) +
(\Phi_+\,\!'(c_h^n),\chi_h) \nonumber\\
&= \Cn^2 b_\mathrm{diff}(c_h^{n-1};\chi_h) -(\Phi_-\,\!'(c_h^{n-1}),\chi_h).
\end{align}
%
\par
Step~2. Apply flux limiter and slope limiter (see \Cref{sec:numerical_scheme:flux} and \Cref{sec:numerical_scheme:slope}) to 
obtain post-processed order parameter, still denoted by $c_h^n$.
\begin{equation}
c_h^n \leftarrow \mathcal{S}(\mathcal{L}(c_h^{n-1},c_h^n,\mu_h^n,\vec{u}_h^{n-1})).
\end{equation}
\par
Step~3. Compute an updated chemical potential, still denoted by $\mu_h^n$ by solving
for all $\chi_h\in X_h$
\begin{align}
(\mu_h^n,\chi_h) 
&= \Cn^2\,a_{\mathrm{diff},\partial\Omega^\mathrm{in}}(c_h^n,\chi_h) + (\Phi_+\,\!'(c_h^n),\chi_h) \nonumber\\
&- \Cn^2 b_\mathrm{diff}(c_h^{n};\chi_h) + (\Phi_-\,\!'(c_h^{n}),\chi_h).
\end{align}
\par
Step~4. Compute $\rho_h^n$ 
\begin{equation}
\rho_h^n = \frac{1+c_h^n}{2}\rho_\mathrm{A} + \frac{1-c_h^n}{2}\rho_\mathrm{B}.
\end{equation}
\par
Step~5. Compute $\vec{v}_h^n$ such that for all $\bftheta_h\in\vec{X}_h$
\begin{align}
(\rho_h^n\vec{v}_h^n,\bftheta_h) + \tau\,a_\mathrm{reac}(\rho_h^n,\vec{v}_h^{n-1};\vec{v}_h^n,\bftheta_h)
+ \frac{\tau}{\Rey}\,a_\mathrm{ellip}(\vec{v}_h^n,\bftheta_h) = (\rho_h^{n-1}\vec{v}_h^{n-1},\bftheta_h)\nonumber\\
-\frac{\tau}{\Rey\,\Ca}\,b_\mathrm{pres}(p_h^{n-1},\phi_h^{n-1};\bftheta_h)
+ \frac{3\tau}{2\sqrt{2}\,\Rey\,\Ca\,\Cn} (\mu_h^n \, \vec{\nabla}_h c_h^n, \bftheta_h)
+\tau b_\mathrm{vel}(\bftheta_h).
\end{align}
\par
Step~6. Compute $\phi_h^n$ such that for all $\chi_h\in X_h$
\begin{align}
a_{\mathrm{diff},\partial\Omega^\mathrm{out}}(\phi_h^n,\chi_h) 
= b_\mathrm{dens}(\rho_h^n,\phi_h^{n-1};\chi_h)
-\frac{\Rey\,\Ca\,\rho_\mathrm{ref}}{\tau}\,(\vec{\nabla}_h \cdot \vec{v}_h^n, \chi_h).
\end{align}
\par
Step~7. Compute $(p_h^n, \vec{u}_h^n)$ such that for all $(\chi_h,\bftheta_h)\in X_h\times \vec{X}_h$
\begin{equation}
(p_h^n,\chi_h) = (p_h^{n-1},\chi_h) + (\phi_h^n,\chi_h) - \frac{2}{3} \Ca\,(\vec{\nabla}_h \cdot\vec{v}_h^n, \chi_h),
\end{equation}
\begin{equation}
(\vec{u}_h^n,\bftheta_h) + (\vec{\nabla}_h \cdot\vec{u}_h^n, \vec{\nabla}_h \cdot \bftheta_h) = 
(\vec{v}_h^n,\bftheta_h) 
- \frac{\tau}{\Rey\,\Ca \,\rho_\mathrm{ref}}\,(\vec{\nabla}_h\phi_h^n
+ (\frac{\rho_{\mathrm{ref}}}{\rho^n} - 1)\vec{\nabla}_h{\phi_h^{n-1}},\bftheta_h). \label{eq:discF}
\end{equation}
To start the algorithm, the initial discrete conditions are: $p_h^0=\phi_h^0=0$; $\vec{v}_h^0$ is the $L^2$ projection of $\vec{v}^0$; and $c_h^0$ is obtained by first applying the $L^2$ projection operator to $c^0$ and then by applying the slope limiter.  This will create an approximation $c_h^0$ that is bound-preserving. We point out that Step~1 yields a nonlinear system of equations, that will be solved by Newton's method. The order parameter is then post-processed by applying flux and slope limiters described in the next section. 
Step 5 yields a linear system of equations because the velocity is time-lagged in the nonlinear reaction term. We remark that for divergence-free velocity $\vec{v}$, we have
the identity:
\[
\div{(\rho \vec{v}\otimes \vec{v})} = \vec{v}\cdot\grad{(\rho\vec{v})}.
\]
Therefore, we propose the following DG discretization of the nonlinear reaction term:
\begin{multline*}
a_\mathrm{reac}(\rho,\vec{v};\vec{z},\bftheta) = 
(\vec{v}\cdot\vec{\nabla}_h(\rho \vec{z}),\bftheta) 
+ \frac{1}{2} (\vec{\nabla}_h \cdot\vec{v},\rho \vec{z}\cdot\bftheta)\\
- \frac{1}{2} (\jump{\vec{v}\cdot\normal_e}, \avg{\rho \vec{z} \cdot\bftheta})_{\Gammah\cup\partial\Omega^\mathrm{in}} 
+ \sum_{E\in\setE_h} \big(\abs{\avg{\vec{v}}\cdot\normal_{E}},((\rho\vec{z})^\mathrm{int} - (\rho \vec{z})^\mathrm{ext})\cdot\bftheta\big)_{\partial E_{-}^{\vec{v}}}.
\end{multline*}
Additional notation is needed for the definition of $a_\mathrm{reac}$. For an element $E\in\setE_h$, we denote by $\chi^\mathrm{int}$ (resp. $\chi^\mathrm{ext}$) the trace of the function $\chi$ on a side of $E$ coming from the interior (resp. exterior)
of $E$. Let $\vec{n}_E$ denote the unit outward normal vector to $E$. The upwind part of the boundary of $E$ with respect to $\vec{v}$ is denote by $\partial E_{-}^{\vec{v}}$. It is defined as
\[
\partial E_{-}^{\vec{v}} = \{ \vec{x}\in\partial E:~~ \avg{\vec{v}(\vec{x})}\cdot\vec{n}_E < 0\}.
\] 
The first term in $a_\mathrm{reac}$ is obtained by multiplying the term $\vec{v}\cdot\grad{(\rho\vec{v})}$ by a test function and integrating over all the mesh elements. The remaining terms in $a_\mathrm{reac}$ are added for numerical stability and they vanish if $\vec{z}=\vec{v}$ is the exact solution. In the case where $\rho_\mathrm{A}=\rho_\mathrm{B}$, the density $\rho$ reduces to one constant in the whole domain $\Omega$; and the form $a_\mathrm{reac}$ simplifies to a form that was introduced and analyzed in \cite{girault2005discontinuous}. 
\par 
Another new form that we propose in this paper is the discretization of the term 
$-\div{\Big((1-\frac{\rho_\mathrm{ref}}{\rho^{n}})\grad{\phi^{n-1}}\Big)}$ that appears in the right-hand side of
\cref{eq:CHNS:time_dis_const4}. 
\begin{align*}
b_\mathrm{dens}(\rho_h^n,\phi_h^{n-1};\chi_h)
=\, &((1-\frac{\rho_\mathrm{ref}}{\rho_h^n})\vec{\nabla}_h \phi_h^{n-1}\cdot\vec{\nabla}_h\chi_h) \\
-\, &(\llbracket (1-\frac{\rho_\mathrm{ref}}{\rho_h^n})(\vec{\nabla} \phi_h^{n-1}\cdot
\vec{n}_e) \chi_h\rrbracket, 1)_{\Gamma_h\cup\partial\Omega^\mathrm{out}}.
\end{align*}
The remaining forms are standard discretizations of the diffusion operator $-\laplace{\xi}$ and advection operator $\div{(\vec{v} \xi)}$ \cite{LFTAR2018Efficient,Rivierebook}. For completeness, we recall the forms below and we skip their derivation.
Let $\xi$ and $\eta$ be two scalar discrete functions.
\begin{align*}
a_\mathrm{diff}(\xi,\chi) &= 
(\vec{\nabla}_h \xi, \vec{\nabla}_h \chi)
-(\avg{\vec{\nabla}_h\xi\cdot\normal_e}, \jump{\chi})_{\Gammah}\\
&- (\avg{\vec{\nabla}_h\chi\cdot\normal_e}, \jump{\xi})_{\Gammah}
+\frac{\sigma}{h} (\jump{\xi},\jump{\chi})_{\Gammah},
\\
a_\mathrm{diff,in}(\xi,\chi) &= a_\mathrm{diff}(\xi,\chi)
- (\vec{\nabla}_h \xi\cdot\normal_e, \chi)_{\partial\Omega^\mathrm{in}}\\
&- (\vec{\nabla}_h \chi\cdot\normal_e, \xi)_{\partial\Omega^\mathrm{in}}
+\frac{\sigma}{h} (\xi, \chi)_{\partial\Omega^\mathrm{in}},
\\
a_\mathrm{diff,out}(\xi,\chi) &= a_\mathrm{diff}(\xi,\chi)
- (\vec{\nabla}_h\xi\cdot\normal_e, \chi)_{\partial\Omega^\mathrm{out}}\\
&- (\vec{\nabla}_h\chi\cdot\normal_e, \xi)_{\partial\Omega^\mathrm{out}}
+\frac{\sigma}{h} (\xi, \chi)_{\partial\Omega^\mathrm{out}},
\\
a_\mathrm{adv}(\xi,\vec{v},\chi) &=  
-(\xi,\vec{v} \cdot \vec{\nabla}_h \chi)
+ (\upwind{\xi} \avg{\vec{v}\cdot\normal_e}, \jump{\chi})_{\Gammah},
\end{align*}
where the upwind quantity $\upwind{\xi}$ on an interior face $e$ with normal $\vec{n}_e=\on{\vec{n}}{E_{k^-}}$ is defined by
\begin{align*}
\on{\xi^\uparrow}{e\in\Gammah} =
\begin{cases}
\on{\xi}{E_{k^-}} & \text{if}~\avg{\vec{v}}\cdot\vec{n}_e \geq 0, \\
\on{\xi}{E_{k^+}} & \text{if}~\avg{\vec{v}}\cdot\vec{n}_e < 0.
\end{cases} 
\end{align*}
The form $a_\mathrm{ellip}$ is the non-symmetric DG discretization of the vector differential operator $-\laplace{\vec{z}}$.  
\begin{multline*}
a_\mathrm{ellip}(\vec{z},\bftheta) = 
(\vec{\nabla}_h \vec{z}, \vec{\nabla}_h \bftheta)
- (\avg{(\vec{\nabla}_h\vec{z})\,\normal_e}, \jump{\bftheta})_{\Gammah}
+ (\avg{(\vec{\nabla}_h\bftheta)\,\normal_e}, \jump{\vec{z}})_{\Gammah} \\
+ \frac{\sigma}{h} (\jump{\vec{z}},\jump{\bftheta})_{\Gammah}
- ((\vec{\nabla}_h \vec{z})\,\normal_e, \bftheta)_{\partial\Omega^\mathrm{in}}
+ ((\vec{\nabla}_h \bftheta)\,\normal_e, \vec{z})_{\partial\Omega^\mathrm{in}}
+\frac{\sigma}{h} (\vec{z}, \bftheta)_{\partial\Omega^\mathrm{in}}.
\end{multline*}
The remaining forms in the right-hand sides of the discrete equations are
\begin{align*}
b_\mathrm{diff}(\xi;\chi) 
=& -\,(c_\mathrm{D}, \vec{\nabla}_h \chi\cdot\vec{n}_e)_{\partial\Omega^\mathrm{in}}
+ \frac{\sigma}{h} (c_\mathrm{D}, \chi)_{\partial\Omega^\mathrm{in}}\\
&- \frac{\sqrt{2}\delta \cos(\theta)}{2\,\mathrm{Cn}} (\xi^2-1,\chi)_{\partial\Omega^{\mathrm{wall}}\cup\partial\Omega^{\mathrm{out}}},\\
b_\mathrm{pres}(p,\phi;\bftheta) 
&= -\,(p, \vec{\nabla}_h \cdot \bftheta)
+(\avg{p}, \jump{\bftheta\cdot\vec{n}_e})_{\Gammah\cup\partial\Omega}
+(\vec{\nabla}_h \phi,\bftheta),\\
b_\mathrm{vel}(\bftheta)
&= -\,(\vec{v}_\mathrm{D}\cdot\vec{n}, \vec{v}_\mathrm{D} \cdot\bftheta)_{\partial\Omega^\mathrm{in}}
- \frac{1}{\Rey} (\vec{\nabla}_h \bftheta, \vec{v}_\mathrm{D}\cdot\vec{n}_e)_{\partial\Omega^\mathrm{in}}
+ \frac{\sigma}{h \Rey} (\vec{v}_\mathrm{D}, \bftheta)_{\partial\Omega^\mathrm{in}}.
\end{align*}
\par
We note that many of the forms above employ penalty parameters $\sigma>0$; they may take different values for different forms.  It is known that the value of the penalty parameter
has to be large enough to have coercivity of the forms  $a_\mathrm{diff}, a_\mathrm{diff,in}, a_\mathrm{diff,out}$ but it can simply
be taken equal to one for the coecivity of the form $a_\mathrm{ellip}$ \cite{Rivierebook}.  The different penalty values used in our
numerical results are made precise for each simulation in \Cref{sec:numerical_experiments}.


We end this section by stating important properties satisfied by the discrete order parameter, namely the bound preserving property
and the mass conservation property.

\begin{theorem}\label{thm:bdd}
The numerical approximation of the order parameter is bound preserving:
\begin{equation}\label{eq:bddch}
-1 \leq c_h^n(\vec{x}) \leq +1, \quad \vec{x}\in\Omega, \quad 0\leq n\leq N_T.
\end{equation}
\end{theorem}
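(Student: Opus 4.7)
The plan is to prove \Cref{thm:bdd} by induction on the time index $n$, using the two post-processing operators $\mathcal{L}$ (flux limiter) and $\mathcal{S}$ (slope limiter) introduced in Step~2 of the fully discrete scheme. The essential content of the theorem is not analytic but design-based: the algorithm is constructed precisely so that the bounds are enforced at the end of every time step. Therefore the proof will reduce to checking three facts about $c_h^n$ at the end of Step~2, invoking properties of the limiters that are established in \Cref{sec:numerical_scheme:flux} and \Cref{sec:numerical_scheme:slope}.

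First I would dispose of the base case $n=0$. By construction, $c_h^0$ is obtained by applying the $L^2$ projection to $c^0$, whose range is $[-1,+1]$, and then applying the slope limiter $\mathcal{S}$. The $L^2$ projection preserves the cell average on each element $E_k$, so each cell average of the projected field lies in $[-1,+1]$. The defining property of the vertex-based slope limiter is that it produces a piecewise polynomial whose pointwise values on $\overline{E}_k$ lie in the interval spanned by the cell averages of $E_k$ and its neighbors; in particular, the output is confined to $[-1,+1]$ on each element, hence on $\Omega$.

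Next I would handle the inductive step. Assume $c_h^{n-1}$ satisfies \eqref{eq:bddch}. After Step~1 the pre-limited order parameter need not respect the bounds, because the DG discretization of the advective Cahn--Hilliard equations does not satisfy a discrete maximum principle for $r \geq 1$. The flux limiter $\mathcal{L}$, however, is constructed so that the cell-averaged values of $\mathcal{L}(c_h^{n-1},c_h^n,\mu_h^n,\vec{u}_h^{n-1})$ lie in $[-1,+1]$ on every $E_k\in\setE_h$; this is exactly the bound-preserving property established in the flux-limiting framework of \cite{frank2019bound,kuzminmoller2005}. Applying the vertex-based slope limiter $\mathcal{S}$ then yields a function whose pointwise values on each element are contained in the convex hull of its cell averages and those of its face-neighbors, all of which lie in $[-1,+1]$ by the previous sentence. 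Hence the post-processed $c_h^n$ satisfies \eqref{eq:bddch} on $\Omega$, and the induction closes.

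The main obstacle, and the only non-trivial ingredient, is verifying the two claimed properties of $\mathcal{L}$ and $\mathcal{S}$: (i) that the output of the flux limiter has cell averages in $[-1,+1]$ whenever the input does, and (ii) that the vertex-based slope limiter is local-extremum-diminishing in the precise sense needed above. Property (i) rests on writing the Step~1 equation in conservative cell-average form and choosing the limited face fluxes so that each cell average $\bar c_{h,k}^n$ is a convex combination of neighboring cell averages at time $t^{n-1}$ together with the Dirichlet data $c_{\mathrm D}\in[-1,+1]$; the coefficients are nonnegative under the CFL-type restriction specified in \Cref{sec:numerical_scheme:flux}. Property (ii) is the standard convex-hull property of the Kuzmin vertex-based limiter \cite{kuzmin2010vertex,kuzmin2013slope}. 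Once these two properties are granted, the induction above is immediate.
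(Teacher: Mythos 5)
Your induction skeleton (base case via $L^2$ projection plus slope limiter; inductive step via the flux limiter to control cell averages, then the slope limiter for pointwise bounds) is exactly the paper's. The gap is in how you justify the one genuinely nontrivial ingredient, the bound-preservation of the flux limiter. You claim it follows from writing Step~1 in conservative cell-average form and choosing limited face fluxes so that each new cell average is a convex combination of the old neighboring averages and the Dirichlet data, with nonnegative coefficients ``under the CFL-type restriction specified in \Cref{sec:numerical_scheme:flux}.'' No such CFL restriction exists anywhere in the scheme, and the algorithm $\mathcal{L}_a$ is not of that form: it is an iterative Zalesak/FCT-style correction in which, at each sweep $m$, the limiting factors $\alpha_E(e)$ are taken as minima of $1$ and the ratios $Q_E^{\pm}/P_E^{\pm}$ (coupled across the two elements sharing a face), precisely so that the total limited flux added to $\bar c^{(m-1)}_{h,E}$ can never exceed the remaining distances $Q_E^{+}=\abs{E}\,(c^\ast-\bar c^{(m-1)}_{h,E})$ and $Q_E^{-}$ to the bounds. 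This is how the paper proves bound preservation \emph{unconditionally}: \Cref{thm:flux:boundedness} shows every iterate stays in $[c_\ast,c^\ast]$, \Cref{thm:flux:convergence} shows the iteration terminates, and together they give \Cref{lem:boundpres}, which is the cell-average statement your induction needs. Your convex-combination/CFL mechanism describes a different scheme, and since the theorem is essentially the statement that the limiters do what they are designed to do, deferring this step to a hypothesis the method does not impose leaves the core of the proof unestablished.

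A second, smaller issue: for $r\geq 2$ (used in the merging-droplets test) the vertex-based limiter alone does not give the pointwise ``convex hull'' control you assert, because a quadratic can exceed its vertex values in the interior of an element. The paper handles this by first reducing the solution on each troubled cell to a linear polynomial (preserving its cell average) and only then applying the vertex-based correction; for linears the extrema are attained at vertices, so enforcing the bounds at vertices bounds the whole element. As written, your argument covers only the $r=1$ case. (The description of the slope limiter is also slightly off: the paper's version scales the gradient so that the \emph{physical} bounds $[c_\ast,c^\ast]$ hold at the vertices, rather than the local bounds given by neighboring cell averages, though either variant would suffice once the cell averages are known to lie in $[-1,1]$.)
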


\begin{theorem}\label{thm:mass}
For a closed system, i.e., in the case where $\partial\Omega^\mathrm{in} = \partial\Omega^\mathrm{out} = \emptyset$, we have
\begin{equation}\label{eq:mass1}
\frac{1}{\vert \Omega\vert} \int_\Omega c_h^n = \frac{1}{\vert \Omega\vert}\int_\Omega c_h^0, \quad \forall 0\leq n\leq N_T,
\end{equation}
and
\begin{equation}\label{eq:mass2}
\frac{1}{\vert \Omega\vert} \int_\Omega \rho(c_h^n) = \frac{1}{\vert \Omega\vert} \int_\Omega \rho(c_h^0), \quad \forall 0\leq n\leq N_T.
\end{equation}
\end{theorem}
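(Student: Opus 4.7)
My plan is to prove the two identities by induction on the time level $n$, with the base case $n=0$ being trivial. The second identity will follow from the first by the linearity of the mixing rule \cref{eq:CHNS:linear_mixing}, so the heart of the argument is \eqref{eq:mass1}.

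For the inductive step on \eqref{eq:mass1}, I would exploit the fact that constants belong to $X_h$ (since $r\geq 1$) and test the first equation of Step~1 with $\chi_h \equiv 1$. In the closed-system case, $\partial\Omega^\mathrm{in}=\partial\Omega^\mathrm{out}=\emptyset$, so the inflow boundary term on the right-hand side vanishes. The remaining task is to show that $a_\mathrm{diff}(\mu_h^n,1)=0$ and $a_\mathrm{adv}(c_h^n,\vec{u}_h^{n-1},1)=0$. Inspecting the definition of $a_\mathrm{diff}$, every term either contains $\vec{\nabla}_h 1$ or $\jump{1}$, both of which vanish identically; an identical observation applies to $a_\mathrm{adv}$. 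Hence the first equation of Step~1 reduces to $(c_h^n,1)=(c_h^{n-1},1)$, i.e., the spatial mean of the order parameter is preserved by the nonlinear solve before post-processing.

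The main obstacle, and the step that actually requires justification beyond pure computation, is verifying that the post-processing operator $\mathcal{S}\circ\mathcal{L}$ in Step~2 does not alter $\int_\Omega c_h^n$. This is the substantive content of \Cref{thm:mass}: one must appeal to the design of the flux limiter $\mathcal{L}$, which by construction (see \Cref{sec:numerical_scheme:flux}, following \cite{frank2019bound,kuzminmoller2005}) modifies inter-element fluxes in a conservative, skew-symmetric fashion and therefore preserves each element-wise average, and to the vertex-based slope limiter $\mathcal{S}$ of \cite{kuzmin2010vertex,kuzmin2013slope}, which only rescales gradients around the local element mean and thus leaves each cell average invariant. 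Together these imply $\int_E c_h^n$ is unchanged on every $E\in\setE_h$, so a fortiori $\int_\Omega c_h^n$ is unchanged. Iterating the identity $\int_\Omega c_h^n = \int_\Omega c_h^{n-1}$ from $n$ down to $0$ yields \eqref{eq:mass1}.

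Finally, for \eqref{eq:mass2}, I would simply integrate \cref{eq:CHNS:linear_mixing} applied to $c_h^n$ over $\Omega$:
\begin{equation*}
\int_\Omega \rho(c_h^n) = \frac{\rho_\mathrm{A}+\rho_\mathrm{B}}{2}\,\abs{\Omega} + \frac{\rho_\mathrm{A}-\rho_\mathrm{B}}{2}\int_\Omega c_h^n.
\end{equation*}
By \eqref{eq:mass1} the right-hand side is independent of $n$, and in particular equals its value at $n=0$, which is $\int_\Omega \rho(c_h^0)$. Dividing by $\abs{\Omega}$ delivers the claim. I expect the bulk of a careful write-up to consist of explicitly verifying the mean-preserving property of the two limiters used in Step~2; all other manipulations are mechanical.
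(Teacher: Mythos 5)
There is a genuine gap, and it sits exactly where you concede the ``substantive content'' lies. Your claim that the flux limiter $\mathcal{L}$ ``preserves each element-wise average'' is false and cannot be the basis of the argument: by Step~4 of $\mathcal{L}$, the cell average of $c_h^n$ on each element is \emph{replaced} by $\bar{c}_{h,E}^{\mathrm{post}}$, which the iterative algorithm $\mathcal{L}_a$ builds starting from the \emph{previous} time level's average $\bar{c}_{h,E}^{n-1}$ and adding limited fluxes; in general $\bar{c}_{h,E}^{\mathrm{post}}\neq\bar{c}_{h,E}^{n}$ on individual elements (otherwise the limiter could never repair cell averages that drift or violate the bounds). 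What is true, and what must actually be proved, is a \emph{global} statement: $\sum_{E}\abs{E}\,\bar{c}_{h,E}^{\mathrm{post}}=\sum_{E}\abs{E}\,\bar{c}_{h,E}^{n-1}$. This does not follow ``by construction'': the raw fluxes $\mathcal{H}_E(e)$ are antisymmetric across an interior face $e=\partial E\cap\partial E'$, but after limiting the fluxes are multiplied by factors $\alpha_E(e)$ that are computed element by element and iteration by iteration, so one must show $\alpha_E(e)=\alpha_{E'}(e)$ and hence $\mathcal{H}_{E}^{(m)}(e)=-\mathcal{H}_{E'}^{(m)}(e)$ at \emph{every} iteration $m$ of $\mathcal{L}_a$. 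The paper does this by induction on $m$, checking the three sign cases of $\mathcal{H}_{E}^{(m-1)}(e)$ against the case-defined formulas for $\alpha_E(e)$, and then uses the vanishing of $\mathcal{H}_E(e)$ on wall faces (the only boundary faces in a closed system) to conclude from \cref{eq:flux_algorithm:step4_1} that $\sum_E\abs{E}\,\bar{c}_{h,E}^{(m)}$ is constant in $m$, hence equals $\sum_E\abs{E}\,\bar{c}_{h,E}^{(0)}=\sum_E\abs{E}\,\bar{c}_{h,E}^{n-1}$. Your write-up neither states nor proves this face-matching property of the limiting factors, and the property you substitute for it (elementwise mean preservation) is the wrong invariant.

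A secondary remark on structure: because $\mathcal{L}_a$ anchors the post-processed averages at $\bar{c}_{h,E}^{n-1}$, the conservativity of the limited fluxes already gives $\int_\Omega c_h^n=\int_\Omega c_h^{n-1}$ directly, so your preliminary step of testing the Step~1 equation with $\chi_h\equiv 1$ (which is correct in itself for a closed system) is not needed in the paper's argument; conservation at the discrete level comes entirely from the limiter, not from the unlimited DG solve. Your treatment of the slope limiter (it rescales only the gradient about the cell mean, so cell averages are untouched, including after the reduction to linears for $r\geq 2$) and your derivation of \eqref{eq:mass2} from \eqref{eq:mass1} via the linear mixing rule \cref{eq:CHNS:linear_mixing} agree with the paper and are fine. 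To repair the proof, state and prove by induction on $m$ the two identities $\alpha_E(e)=\alpha_{E'}(e)$ and $\mathcal{H}_{E}^{(m)}(e)=-\mathcal{H}_{E'}^{(m)}(e)$ for interior faces, note $\mathcal{H}_E(e)=0$ on $\partial\Omega^{\mathrm{wall}}$, and then sum \cref{eq:flux_algorithm:step4_1} over all elements.
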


The proofs of \Cref{thm:bdd} and \Cref{thm:mass} are given at the end of \Cref{sec:numerical_scheme:slope}.

\subsection{Element-wise mass average restriction}\label{sec:numerical_scheme:flux}
The order parameter takes the value $c_\ast = -1$ in one bulk phase and the value $c^\ast = +1$ in the other bulk phase. It is well known that the numerical approximation of the order parameter may exhibit a bulk shift in some parts of the domain, i.\,e., all the bulk values are either shifted up or down by a small amount \cite{FLAR2018finite,LeeMunchSuli}. The amount of bulk shift depends on the curvature of the interface, and it is reduced with decreasing mesh size.  In order to overcome this non-physical bulk shift, we propose to apply a flux limiting technique to the discrete order parameter that will produce a bound-preserving cell-averaged order parameter.  The flux limiter was recently introduced for discontinuous Galerkin discretizations of conservation laws in \cite{frank2019bound} and it is related to the class of flux-corrected transport algorithms (see \cite{kuzmin2012flux} and the references therein).
%
%
\par
We now describe the flux limiting technique that is applied to the discrete order parameter $c_h^{n}$ obtained at the end of Step~1 in the fully discrete scheme.  We assume that the order parameter at the previous time step satisfies the desired bounds:
\[
c_\ast \leq c_h^{n-1}(\vec{x}) \leq c^\ast, \quad \forall \vec{x}\in\Omega.
\]
The flux limiting approach consists of four successive steps:\\
\noindent{\bf Algorithm}: $c_{h}^n = \mathcal{L}(c_h^{n-1}, c_h^n, \mu_h^n, \vec{u}_h^{n-1})$ 
\begin{itemize}[leftmargin=1.5cm]
\item[Step~1.] Compute element-wise average at current and past time on each element $E \in \setE_h$. 
\begin{align*}
\bar{c}_{h,E}^{n-1} = \frac{1}{\abs{E}}\int_E c_h^{n-1} 
\quad \text{and} \quad
\bar{c}_{h,E}^{n} = \frac{1}{\abs{E}}\int_E c_h^{n}.  
\end{align*}
\item[Step~2.] Fix an element $E\in\setE_h$ and compute the flux, $\mathcal{H}_E(e)$,  on each face $e\subset\partial E$. We recall that $\vec{n}_E$ is the unit outward normal vector to $E$.
\begin{align*}
\forall e \in \Gammah,\, e = \partial{E} \cap \partial{E'}, &&
\mathcal{H}_{E}(e) &= 
- \frac{1}{\Pe}\int_{e}\avg{\grad{\mu_h^n}}\cdot\normal_{E} 
+ \int_{e} (c_h^n)^\uparrow \avg{\vec{u}_h^{n-1}}\cdot\normal_{E},\\
&& &+ \frac{\sigma}{\Pe\,h}\int_{e}(\on{\mu_h^n}{E} - \on{\mu_h^n}{E'}),\\
\forall e \subset\partial{E}\cap\partial{\Omega}^\mathrm{in}, &&
\mathcal{H}_{E}(e) &= \int_{e} c_\mathrm{D} \, \vec{u}_h^{n-1}\cdot\normal_{E},\\
\forall e\subset\partial{E}\cap\partial{\Omega}^\mathrm{wall}, &&
\mathcal{H}_{E}(e) &= 0,\\
\forall e\subset\partial{E}\cap\partial{\Omega}^\mathrm{out}, &&
\mathcal{H}_{E}(e) &= \int_{e} \upwind{(c_h^n)} \vec{u}_h^{n-1}\cdot\normal_{E}.
\end{align*}
The function $\mathcal{H}_{E}(e)$ measures the net mass flux across each face of the element $E$, into a neighboring element if $e$ is an interior face or into the exterior of the computational domain if $e$ is a boundary face.
\item[Step~3.] On each element $E \in \setE_h$, apply an iterative algorithm to limit the fluxes. 
\[
\forall E\in\setE_h, \quad \bar{c}_{h,E}^\mathrm{post} = \mathcal{L}_a(E,\bar{c}_{h,E}^{n-1},\mathcal{H}_E).
\]
\item[Step~4.] Update the order parameter as follows:
\begin{equation*}
\forall E\in\setE_h, \quad \on{c_{h}^{n}}{E} \leftarrow 
\on{c_{h}^{n}}{E} - \bar{c}_{h,E}^{n} + \bar{c}^\mathrm{post}_{h,E}.
\end{equation*}
\end{itemize}
It remains to explain the iterative algorithm used to limit the fluxes.
%
\par
\noindent{\bf Algorithm}: $\bar{c}_{h,E}^\mathrm{post} = \mathcal{L}_a(E,\bar{c}_{h,E}^{n-1},\mathcal{H}_E)$ 
\begin{itemize}[leftmargin=1.75cm]
\item[Step~1.] Initialization: $m=1, \bar{c}^{(0)}_{h,E} = \bar{c}_{h,E}^{n-1}$, $\mathcal{H}^{(0)}_E = \mathcal{H}_E, \, 
\mathcal{H}_E^{(-1)} = 0$ and 
$\alpha_E(e) = 1$ for all $e\in\partial E$.
\item[Step~2.]
Compute the following quantities:
\begin{subequations}\label{eq:flux_algorithm:PQ}
\begin{align}
P^{+}_E &= \tau\!\sum_{e\subset\partial E} \max{\!\big(0,\, -\mathcal{H}_{E}^{(m-1)}(e)\big)}, &
Q^{+}_E &= \abs{E}\big(c^\ast - \bar{c}^{(m-1)}_{h,E}\big),\label{eq:flux_algorithm:PQ1}\\
P^{-}_{E} &= \tau\!\sum_{e\subset\partial E} \min{\!\big(0,\, -\mathcal{H}_{E}^{(m-1)}(e)\big)}, &
Q^{-}_{E} &= \abs{E}\big(c_\ast - \bar{c}^{(m-1)}_{h,E}\big).\label{eq:flux_algorithm:PQ2}
\end{align}
\end{subequations}
\item[Step~3.] Compute limiting factors $\alpha_{E}(e)$ for all faces $e\subset\partial E$.\\
If $e$ is an interior face such that $e\subset\partial E\cap\partial E'$:
\begin{itemize}[leftmargin=0.5cm, label=\labelitemi]
\item If $\mathcal{H}_{E}^{(m-1)}(e) < 0$, then set $\alpha_{E}(e) = \min(\alpha^{+}_E,\, \alpha^{-}_{E'})$, \\
where $\alpha^{+}_E = \min(1,\, Q^{+}_E / P^{+}_E)$ and $\alpha^{-}_{E'} = \min(1,\, Q^{-}_{E'} / P^{-}_{E'})$.
\item If $\mathcal{H}_{E}^{(m-1)}(e) > 0$, then set $\alpha_{E}(e) = \min(\alpha^{-}_E,\, \alpha^{+}_{E'})$, \\
where $\alpha^{-}_E = \min(1,\, Q^{-}_E / P^{-}_E)$ and $\alpha^{+}_{E'} = \min(1,\, Q^{+}_{E'} / P^{+}_{E'})$.
\end{itemize}
If $e$ is a boundary face such that $e\subset\partial E\cap\partial \Omega$:
\begin{itemize}[leftmargin=0.5cm, label=\labelitemi]
\item If $\mathcal{H}_{E}^{(m-1)}(e) < 0$, then set $\alpha_{E}(e) = \min(1,\, Q^{+}_E / P^{+}_E)$.
\item If $\mathcal{H}_{E}^{(m-1)}(e) > 0$, then set $\alpha_{E}(e) = \min(1,\, Q^{-}_E / P^{-}_E)$. 
\end{itemize}
\item[Step~4.] Update $\bar{c}^{(m)}_{h,E}$ and $\mathcal{H}^{(m)}_E$ by:
\begin{subequations}
\begin{align}
\bar{c}^{(m)}_{h,E} &= \bar{c}^{(m-1)}_{h,E} - \frac{\tau}{\abs{E}} \sum_{e\subset\partial E} \alpha_{E}(e) \mathcal{H}_{E}^{(m-1)}(e), \label{eq:flux_algorithm:step4_1}\\
\mathcal{H}_{E}^{(m)}(e) &= \big(1 - \alpha_{E}(e)\big)\,\mathcal{H}_{E}^{(m-1)}(e), \quad\forall e\subset\partial{E}.\label{eq:flux_algorithm:step4_2}
\end{align}
\end{subequations}
\item[Step~5.] 
If
$\displaystyle \max_{\forall E\in\setE_h}\max_{\forall e\subset\partial{E}}\abs{\mathcal{H}_{E}^{(m)}(e)} < \epsilon_1$ or
$\displaystyle \max_{\forall E\in\setE_h}\max_{\forall e\subset\partial{E}}\abs{\mathcal{H}_{E}^{(m)}(e) - \mathcal{H}_{E}^{(m-1)}(e)} < \epsilon_2$, \\
\hspace*{0.5cm} set $\bar{c}^\mathrm{post}_{h,E} = \bar{c}^{(m)}_{h,E}$, \\
Else \\
\hspace*{0.5cm} set $m \leftarrow m+1$ and go to Step~2.
\end{itemize} 
The solution obtained by the flux limiting algorithm has the following boundedness property.
\begin{lemma}\label{thm:flux:boundedness}
Let $E$ be a mesh element and let $\{ \bar{c}_{h,E}^{(k)}\}_k$ be the sequence obtained in the iterative algorithm $\mathcal{L}_a$.
Assume that the iterate $\bar{c}_{h,E}^{(m-1)}$ belongs to the interval $[c_\ast,c^\ast]$. Then the next iterate
$\bar{c}_{h,E}^{(m)}$ also belongs to the interval $[c_\ast,c^\ast]$.
\end{lemma}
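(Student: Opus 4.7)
The plan is to verify the two inequalities $\bar{c}_{h,E}^{(m)}\le c^\ast$ and $\bar{c}_{h,E}^{(m)}\ge c_\ast$ separately, each by splitting the update formula \cref{eq:flux_algorithm:step4_1} into contributions from faces with negative and positive flux. Concretely, I will rewrite
\begin{equation*}
\bar{c}_{h,E}^{(m)} - \bar{c}_{h,E}^{(m-1)}
= -\frac{\tau}{\abs{E}}\!\!\sum_{\substack{e\subset\partial E\\\mathcal{H}_E^{(m-1)}(e)<0}}\!\!\alpha_E(e)\,\mathcal{H}_E^{(m-1)}(e)
  -\frac{\tau}{\abs{E}}\!\!\sum_{\substack{e\subset\partial E\\\mathcal{H}_E^{(m-1)}(e)>0}}\!\!\alpha_E(e)\,\mathcal{H}_E^{(m-1)}(e),
\end{equation*}
noting that the first sum is non-negative (it drives the cell average up) while the second is non-positive (it drives it down).

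For the upper bound, I would discard the second (non-positive) sum, giving
\begin{equation*}
\bar{c}_{h,E}^{(m)} - \bar{c}_{h,E}^{(m-1)}
\le \frac{\tau}{\abs{E}} \sum_{e:\mathcal{H}_E^{(m-1)}(e)<0} \alpha_E(e)\,\bigl(-\mathcal{H}_E^{(m-1)}(e)\bigr).
\end{equation*}
For every face $e$ appearing in this sum, the definition of $\alpha_E(e)$ in Step~3 (both interior and boundary branches with $\mathcal{H}_E^{(m-1)}(e)<0$) yields $\alpha_E(e)\le \min(1,Q_E^+/P_E^+)\le Q_E^+/P_E^+$ whenever $P_E^+>0$. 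Substituting this bound and using $\sum_{e:\mathcal{H}<0}\tau(-\mathcal{H}_E^{(m-1)}(e)) = P_E^+$ from \cref{eq:flux_algorithm:PQ1} collapses the right-hand side to $Q_E^+/\abs{E} = c^\ast - \bar{c}_{h,E}^{(m-1)}$, which is exactly what is needed. The edge case $P_E^+=0$ is immediate: no face contributes to the positive sum, so $\bar{c}_{h,E}^{(m)}\le \bar{c}_{h,E}^{(m-1)}\le c^\ast$ by the inductive hypothesis.

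The lower bound $\bar{c}_{h,E}^{(m)}\ge c_\ast$ is symmetric: drop the non-negative (first) sum, use $\alpha_E(e)\le Q_E^-/P_E^-$ for faces with $\mathcal{H}_E^{(m-1)}(e)>0$ from Step~3, recognize $\sum \tau(-\mathcal{H}_E^{(m-1)}(e))$ over such faces as $P_E^-$ via \cref{eq:flux_algorithm:PQ2}, and conclude that the decrement is at least $Q_E^-/\abs{E}=c_\ast-\bar{c}_{h,E}^{(m-1)}$. The case $P_E^-=0$ is again trivial.

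The main obstacle is essentially bookkeeping: one has to be careful that the $\alpha_E(e)$ used on a face $e=\partial E\cap\partial E'$ with $\mathcal{H}_E^{(m-1)}(e)<0$ is the \emph{minimum} of $\alpha_E^+$ and $\alpha_{E'}^-$, so that the bound by $\alpha_E^+\le Q_E^+/P_E^+$ we need (to control the element $E$) is preserved regardless of what the neighbor contributes; the neighbor's limiter only strengthens the estimate. I also have to track signs carefully because $P_E^-$ and $Q_E^-$ are both non-positive, so the ratio $Q_E^-/P_E^-$ is a legitimate non-negative number and the inequality $\alpha_E(e)P_E^-\ge Q_E^-$ reverses direction when multiplied out. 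Once these two observations are kept straight, the proof is a direct chain of one-line estimates with no further analytic ingredients.
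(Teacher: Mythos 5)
Your proposal is correct and follows essentially the same argument as the paper's proof: the paper applies $\xi\leq\max(0,\xi)$ termwise to \cref{eq:flux_algorithm:step4_1} and bounds $\alpha_E(e)$ by $Q_E^{+}/P_E^{+}$ (resp.\ $Q_E^{-}/P_E^{-}$), which is exactly your sign-splitting of the face sum followed by the same collapse via the definitions in \cref{eq:flux_algorithm:PQ1}--\eqref{eq:flux_algorithm:PQ2}. Your explicit treatment of the degenerate case $P_E^{+}=0$ (and $P_E^{-}=0$) is a minor refinement the paper leaves implicit.
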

\begin{proof}
Let us check the upper bound: $\bar{c}^{(m)}_{h,E} \leq c^\ast$. Since the
iterate $\bar{c}^{(m-1)}_{h,E}$ belongs to the interval $[c_\ast,\,c^\ast]$, it is easy to see that 
$\alpha_{E}(e)\geq0$ for all $ e\subset\partial{E}$.
We apply the inequality $\xi \leq \max(0,\xi)$ to \cref{eq:flux_algorithm:step4_1} and use the definition of $\alpha_{E}(e)$. 
We obtain
\begin{align*}
\bar{c}^{(m)}_{h,E} 
&\leq \bar{c}^{(m-1)}_{h,E} + \frac{\tau}{\abs{E}} \sum_{e\subset\partial E} \alpha_{E}(e) \max\Big(0, -\mathcal{H}_{E}^{(m-1)}(e)\Big)\\
&= \bar{c}^{(m-1)}_{h,E} + \frac{\tau}{\abs{E}} \sum_{e\subset\partial E} \min(\alpha^{+}_E,\, \alpha^{-}_{E'}) \max\Big(0, -\mathcal{H}_{E}^{(m-1)}(e)\Big)\\
&\leq \bar{c}^{(m-1)}_{h,E} + \frac{\tau}{\abs{E}} \sum_{e\subset\partial E} \frac{Q^{+}_E }{P^{+}_E} \max\Big(0, -\mathcal{H}_{E}^{(m-1)}(e)\Big).
\end{align*}
Therefore, with the definition of $P_E^+$ and $Q_E^+$, we have
\begin{equation*}
\bar{c}^{(m)}_{h,E} 
\leq \bar{c}^{(m-1)}_{h,E} + c^\ast - \bar{c}^{(m-1)}_{h,E}
= c^\ast.
\end{equation*}
The proof for the lower bound $\bar{c}^{(m)}_{h,E} \geq c_\ast$ follows a similar argument.
\end{proof}
The next result states that the algorithm $\mathcal{L}_a$ converges.
\begin{lemma}\label{thm:flux:convergence}
The sequence $\{ \mathcal{H}_{E}^{(m)}\}_{m,E}$ defined in the iterative algorithm $\mathcal{L}_a$ converges uniformly
over all elements $E$. For any $\epsilon>0$, there is $M$ such that
\[ 
\max_{\forall E\in\setE_h}\max_{\forall e\subset\partial{E}}\abs{\mathcal{H}_{E}^{(m)}(e)} < \epsilon 
~~\text{or}~~
\max_{\forall E\in\setE_h}\max_{\forall e\subset\partial{E}}\abs{\mathcal{H}_{E}^{(m)}(e) - \mathcal{H}_{E}^{(m-1)}(e)} 
< \epsilon, \quad \forall m\geq M.
\]
\end{lemma}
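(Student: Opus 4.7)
The plan is to use the update formula \eqref{eq:flux_algorithm:step4_2}, namely
\[
\mathcal{H}_E^{(m)}(e) = \bigl(1 - \alpha_E(e)\bigr)\,\mathcal{H}_E^{(m-1)}(e),
\]
to show that $|\mathcal{H}_E^{(m)}(e)|$ is a non-increasing sequence in $m$, which is bounded below by $0$, and hence convergent. The key preliminary step is to verify that the limiting factors $\alpha_E(e)$ constructed in Step~3 of $\mathcal{L}_a$ always lie in $[0,1]$; then the contraction factor $1-\alpha_E(e)$ also lies in $[0,1]$, yielding $|\mathcal{H}_E^{(m)}(e)| \leq |\mathcal{H}_E^{(m-1)}(e)|$.

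First, I would invoke \Cref{thm:flux:boundedness} inductively to guarantee that every iterate $\bar{c}_{h,E}^{(m-1)}$ stays in $[c_\ast,c^\ast]$ for all $m$. This implies $Q_E^+ \geq 0$ and $Q_E^- \leq 0$, while by construction $P_E^+ \geq 0$ and $P_E^- \leq 0$. Consequently, both ratios $Q_E^+/P_E^+$ and $Q_E^-/P_E^-$ are non-negative whenever they are defined, so $\alpha_E^\pm \in [0,1]$ and hence $\alpha_E(e) \in [0,1]$. (For the degenerate case $P_E^\pm = 0$, one observes that this forces the corresponding flux contribution in \eqref{eq:flux_algorithm:step4_2} to already be zero, so $\alpha_E(e)$ is immaterial there and can be taken to be $1$ without affecting the analysis.)

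Next, combining the above with the update formula yields $|\mathcal{H}_E^{(m)}(e)| \leq |\mathcal{H}_E^{(m-1)}(e)|$ for every face $e$ of every element $E$. The sequence $\{|\mathcal{H}_E^{(m)}(e)|\}_m$ is therefore monotonically non-increasing and bounded below by $0$, so it converges to some limit $L_E(e) \geq 0$ by the monotone convergence theorem. Because the set of pairs $(E,e)$ with $E \in \setE_h$ and $e \subset \partial E$ is finite, the convergence is automatically uniform: for any $\epsilon > 0$ there exists $M$ such that $\bigl||\mathcal{H}_E^{(m)}(e)| - L_E(e)\bigr| < \epsilon/2$ for all $m \geq M$ and all $(E,e)$.

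Finally, to obtain the dichotomy stated in the lemma, I would distinguish two cases. If $L_E(e) = 0$ for every $(E,e)$, then the first alternative in the stopping criterion holds for all $m \geq M$. Otherwise, the Cauchy property of each convergent scalar sequence, combined again with the finiteness of $\setE_h$ and its faces, gives $\max_{E,e}|\mathcal{H}_E^{(m)}(e) - \mathcal{H}_E^{(m-1)}(e)| < \epsilon$ for $m$ sufficiently large, which is the second alternative. The main technical obstacle I anticipate is the bookkeeping around the sign conventions and the degenerate case $P_E^\pm = 0$ in the definition of $\alpha_E(e)$; once that is handled cleanly, the convergence argument reduces to the elementary monotone-bounded-sequence argument sketched above.
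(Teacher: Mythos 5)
Your proposal is correct and follows essentially the same route as the paper: the paper's proof also deduces $\abs{\mathcal{H}_E^{(m)}(e)}\leq\abs{\mathcal{H}_E^{(m-1)}(e)}$ by induction on $m$ using \Cref{thm:flux:boundedness} (which guarantees $\alpha_E(e)\in[0,1]$) and then concludes convergence from the resulting monotone, bounded sequence. You simply spell out the details the paper leaves implicit, namely the sign bookkeeping for $P_E^{\pm},Q_E^{\pm}$, the degenerate case $P_E^{\pm}=0$, the uniformity from finiteness of the mesh, and the passage from convergence to the stated stopping dichotomy.
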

\begin{proof}
By induction on $m$, it is easy to show, with the previous lemma, that
\[
\max_{\forall E\in\setE_h}\max_{\forall e\subset\partial{E}}\abs{\mathcal{H}_{E}^{(m)}(e)}
\leq \max_{\forall E\in\setE_h}\max_{\forall e\subset\partial{E}}\abs{\mathcal{H}_{E}^{(m-1)}(e)}.
\]
Therefore, convergence is immediately obtained.
\end{proof}
As an immediate corollary, the output of the algorithm $\mathcal{L}_a$ is bound-preserving.
\begin{lemma}\label{lem:boundpres}
Assume that the cell-average, $\bar{c}_{h,E}^{n-1}$, of the discrete order parameter at time $t^{n-1}$, 
belongs to the interval $[c_\ast,c^\ast]$ for all elements $E\in\mathcal{T}_h$. Fix a mesh
element $E$ and define $\bar{c}_{h,E}^\mathrm{post} = \mathcal{L}_a(E,\bar{c}_{h,E}^{n-1},\mathcal{H}_E)$. 
Then, we have
\begin{equation}
c_\ast \leq \bar{c}_{h,E}^\mathrm{post} \leq c^\ast, \quad \forall E\in \mathcal{T}_h.
\end{equation}
\end{lemma}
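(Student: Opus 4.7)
The plan is to derive this bound directly as a corollary of the two preceding lemmas. The output $\bar{c}_{h,E}^{\mathrm{post}}$ is, by construction of $\mathcal{L}_a$, equal to $\bar{c}_{h,E}^{(M)}$ for some finite index $M$ (the index at which the termination test in Step~5 first passes). So it suffices to show that every iterate $\bar{c}_{h,E}^{(m)}$ produced by $\mathcal{L}_a$ lies in $[c_\ast,c^\ast]$, and then separately to argue that such an index $M$ exists.

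First I would fix an arbitrary element $E$ and argue by induction on the iteration counter $m$. The base case is immediate: by initialization $\bar{c}_{h,E}^{(0)} = \bar{c}_{h,E}^{n-1}$, and the hypothesis of the lemma says $\bar{c}_{h,E}^{n-1} \in [c_\ast,c^\ast]$. For the inductive step, assume $\bar{c}_{h,E}^{(m-1)} \in [c_\ast,c^\ast]$; then \Cref{thm:flux:boundedness} applied to element $E$ yields $\bar{c}_{h,E}^{(m)} \in [c_\ast,c^\ast]$. Hence every iterate stays in the admissible interval.

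Next I would invoke \Cref{thm:flux:convergence} to guarantee that the termination criterion of Step~5 is eventually met: for any tolerances $\epsilon_1,\epsilon_2 > 0$ there exists some finite $M$ for which either $\max_{E,e}\abs{\mathcal{H}_E^{(M)}(e)} < \epsilon_1$ or $\max_{E,e}\abs{\mathcal{H}_E^{(M)}(e) - \mathcal{H}_E^{(M-1)}(e)} < \epsilon_2$. At that iteration the algorithm assigns $\bar{c}_{h,E}^{\mathrm{post}} = \bar{c}_{h,E}^{(M)}$, and by the induction above this value lies in $[c_\ast,c^\ast]$. Since $E$ was arbitrary, the bound holds for all $E \in \mathcal{T}_h$.

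The proof is essentially mechanical once the two preceding lemmas are in hand, so there is no real obstacle — the only minor subtlety is emphasizing that the two terminations conditions in Step~5 are both compatible with staying inside the bounded iterate sequence, which is clear because the membership $\bar{c}_{h,E}^{(m)} \in [c_\ast,c^\ast]$ is independent of which stopping criterion triggers.
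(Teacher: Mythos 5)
Your proposal is correct and matches the paper's reasoning: the paper presents \Cref{lem:boundpres} as an immediate corollary of \Cref{thm:flux:boundedness} (applied inductively from the initialization $\bar{c}_{h,E}^{(0)} = \bar{c}_{h,E}^{n-1}$) and \Cref{thm:flux:convergence} (which guarantees the stopping test in Step~5 is met, so $\bar{c}_{h,E}^{\mathrm{post}}$ equals one of the bounded iterates). Your write-up simply makes this implicit argument explicit, which is exactly the intended proof.
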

%
\subsection{Slope limiting post-processing}\label{sec:numerical_scheme:slope}
The flux limiter described in the previous section ensures that the element-wise average of the order parameter attains values that belong to the interval $[c_\ast,c^\ast]$. Let us first consider the case of piecewise linear approximations ($r=1$). 
In that case, using the barycenter of each element $E$, we can write the discrete solution as:
\begin{equation}\label{eq:Taylorform}
c_h^n(\vec{x}) = \bar{c}_h^n + \grad{c_h^n}(\vec{x}_\mathrm{c})\cdot(\vec{x}-\vec{x}_\mathrm{c}),\quad \forall \vec{x} \in E.
\end{equation}
While the constant part is bound-preserving ($\bar{c}_h^n \in [c_\ast,c^\ast]$), the linear part of the solution may violate
the physical bounds. This lack of maximum principle for DG approximations is refered to as overshoot (resp. undershoot) 
if the upper (resp. lower) bound is not preserved.  We apply a slope limiter technique to eliminate the overshoot and undershoot
phenomena in the order parameter.
%
The algorithm for limiting the order parameter $c_h^n$ at each time step $t^n$ is as follows:\\
\noindent{\bf Algorithm}: $c_{h}^n = \mathcal{S}(c_h^n)$ 
\begin{itemize}[leftmargin=1.75cm]
\item[Step~1.] Detect troubled cells: if there exists a point $\vec{x}$ inside an element $E$ such that $c_h^n(\vec{x})$ is outside the
interval $[c_\ast,c^\ast]$, then mark this element $E$ as a troubled cell and go to Step~2, otherwise move to the next mesh element.  
\item[Step~2.] Using the form \eqref{eq:Taylorform}, find the largest slope correction factor $\beta_E \in [0,\,1]$\,, such that after post-processing, the limited order parameter field
\begin{equation*}
c_h^n(\vec{x}) ~=~ \bar{c}_h^n + \beta_E\,\grad{c_h^n}(\vec{x}_\mathrm{c})\cdot(\vec{x}-\vec{x}_\mathrm{c})
\end{equation*}
is bounded below by $c_\ast$ and above by $c^\ast$ on all vertices of element $E$. The procedure for finding $\beta_E$ is introduced in \cite{kuzmin2010vertex,kuzmin2013slope}.
\end{itemize}
\par
Next, we consider the case where the numerical approximation of the order parameter is a higher degree polynomial, namely $r\geq 2$. 
If one element $E$ is marked as a troubled cell, the discrete solution is reduced to a linear polynomial (for instance by an $L^2$ projection) on that element $E$. Then, Step~2 is applied.\\

\emph{Proof of \Cref{thm:bdd}}:\\
We first remark that the initial approximation $c_h^0$ is obtained by an $L^2$ projection of $c^0$, followed by
the slope limiter described in this section.  Therefore, the cell average $\bar{c}_{h,E}^0$ belongs to $[-1,1]$ for
all mesh elements $E$ 
and the limited $c_h^0$ is such that $-1\leq c_h^0(\vec{x}) \leq +1$ for all $\vec{x}\in E$ and for all elements $E$.

The theorem is then obtained by induction on $n$.  Assume that $\bar{c}_{h,E}^{n-1}$ belongs to $[-1,1]$ for all mesh elements $E$.
\Cref{lem:boundpres} implies that after the application of the flux limiter, the cell average is bound-preserving:
\[
-1 \leq \bar{c}_{h,E}^n\leq +1, \quad \forall E\in\mathcal{T}_h.
\]
In the second step, the slope limiter is applied for the mesh elements where the bounds are violated at a given point in the element. 
For linear polynomials, the extrema will occur at the vertices and after application of the slope limiter, these extrema will belong to $[-1,1]$.
Clearly this implies \eqref{eq:bddch}. For polynomials of degree greater than one, the approximation on a given troubled cell is reduced
to a linear polynomial. Therefore the bounds still hold. $\hfill\square$\\

\emph{Proof of \Cref{thm:mass}}:\\
Fix $n\geq 1$. We observe that
\[
\int_\Omega c_h^n = \sum_{E\in\mathcal{T}_h} \vert E \vert \, \bar{c}_{h,E}^n.
\]
Since the slope limiter preserves the cell average, it suffices to study the effect of the flux limiter.
We have after the application of the flux limiter: $\bar{c}_{h,E}^n = \bar{c}_{h,E}^\mathrm{post}$
and $ \bar{c}_{h,E}^\mathrm{post}$ is obtained when convergence of the iterative process of algorithm $\mathcal{L}_a$
is reached for a given tolerance. In other words, there is $M > 0$ such that $\bar{c}_{h,E}^\mathrm{post} = \bar{c}_{h,E}^{(M)}$.

Let us next prove by induction on $m$ that the following two identities hold 
for any $e=\partial E\cap\partial E'$:
\begin{subequations}\label{eq:flux:mass_conservation}
\begin{align}
\alpha_{E}(e) &= \alpha_{E'}(e),\label{eq:flux:mass_conservation1}\\
\mathcal{H}_{E}^{(m)}(e) &= -\mathcal{H}_{E'}^{(m)}(e).\label{eq:flux:mass_conservation2}
\end{align}
\end{subequations}
These identities certainly hold true for $m=0$ from Step 1 of the algorithm $\mathcal{L}_a$.
Assume \cref{eq:flux:mass_conservation} is true for $m-1$. Then, at the next iteration step $m$, we have the following three cases.
\begin{itemize}[leftmargin=0.75cm]
\item[1.] If $\mathcal{H}_{E}^{(m-1)}(e) < 0$, namely $\mathcal{H}_{E'}^{(m-1)}(e) > 0$, then
\begin{align*}
\alpha_{E}(e) &= \min{\Big(\min{\Big(1,\frac{Q_{E}^+}{P_{E}^+}\Big)},\,\min{\Big(1,\frac{Q_{E'}^-}{P_{E'}^-}\Big)}\Big)} = \min{\Big(1,\frac{Q_{E}^+}{P_{E}^+},\frac{Q_{E'}^-}{P_{E'}^-}\Big)},\\
\alpha_{E'}(e) &= \min{\Big(\min{\Big(1,\frac{Q_{E'}^-}{P_{E'}^-}\Big)},\,\min{\Big(1,\frac{Q_{E}^+}{P_{E}^+}\Big)}\Big)} = \min{\Big(1,\frac{Q_{E}^+}{P_{E}^+},\frac{Q_{E'}^-}{P_{E'}^-}\Big)}.
\end{align*}
\item[2.] If $\mathcal{H}_{E}^{(m-1)}(e) = 0$, namely $\mathcal{H}_{E'}^{(m-1)}(e) = 0$, then the factors $\alpha_{E}(e)$ and $\alpha_{E'}(e)$ are not updated.
\item[3.] If $\mathcal{H}_{E}^{(m-1)}(e) > 0$, namely $\mathcal{H}_{E'}^{(m-1)}(e) < 0$, then
\begin{align*}
\alpha_{E}(e) &= \min{\Big(\min{\Big(1,\frac{Q_{E}^-}{P_{E}^-}\Big)},\,\min{\Big(1,\frac{Q_{E'}^+}{P_{E'}^+}\Big)}\Big)} = \min{\Big(1,\frac{Q_{E}^-}{P_{E}^-},\frac{Q_{E'}^+}{P_{E'}^+}\Big)},\\
\alpha_{E'}(e) &= \min{\Big(\min{\Big(1,\frac{Q_{E'}^+}{P_{E'}^+}\Big)},\,\min{\Big(1,\frac{Q_{E}^-}{P_{E}^-}\Big)}\Big)} = \min{\Big(1,\frac{Q_{E}^-}{P_{E}^-},\frac{Q_{E'}^+}{P_{E'}^+}\Big)}.
\end{align*}
\end{itemize}
Therefore, the identity \cref{eq:flux:mass_conservation1} holds for $m$. 
\Cref{eq:flux:mass_conservation2} is immediately obtained by substituting \cref{eq:flux:mass_conservation1} 
into \cref{eq:flux_algorithm:step4_2}.
\begin{equation*}
\mathcal{H}_{E}^{(m)}(e) 
= \big(1 - \alpha_{E}(e)\big)\,\mathcal{H}_{E}^{(m-1)}(e)
= -\big(1 - \alpha_{E'}(e)\big)\,\mathcal{H}_{E'}^{(m-1)}(e) 
= -\mathcal{H}_{E'}^{(m)}(e).
\end{equation*}
Next, for any $m$, with \cref{eq:flux:mass_conservation} and with the fact that $\mathcal{H}_E^{(m-1)}(e) = 0$ for a boundary face, we
have
\begin{equation*}
\sum_{E\in\setE_h} \sum_{e\subset\partial E} \alpha_{E}(e) \mathcal{H}_{E}^{(m-1)}(e) = 
\sum_{\begin{array}{ll}e\in\Gamma_h\\e=\partial E\cap\partial E'\end{array}} \alpha_E(e) \left(\mathcal{H}_{E}^{(m-1)}(e)+\mathcal{H}_{E'}^{(m-1)}(e)\right) = 0.
\end{equation*} 
With the above equality, \Cref{eq:flux_algorithm:step4_1} implies 
\begin{equation*}
\sum_{E\in\setE_h}\vert E \vert \, \bar{c}_{h,E}^{(m)} = \sum_{E\in\setE_h}\vert E \vert \, \bar{c}_{h,E}^{(m-1)}, \quad \forall m,
\end{equation*}
which yields 
\begin{equation*}
\sum_{E\in\setE_h}\vert E \vert \, \bar{c}_{h,E}^{\mathrm{post}} = \sum_{E\in\setE_h}\vert E \vert \, \bar{c}_{h,E}^{(0)}
= \sum_{E\in\setE_h}\vert E \vert \, \bar{c}_{h,E}^{n-1}.
\end{equation*}
Since $\bar{c}_{h,E}^{n} = \bar{c}_{h,E}^{\mathrm{post}}$, we have proved \eqref{eq:mass1}.
The conservation law of mass density \eqref{eq:mass2} follows by applying the linear mixing rule \cref{eq:CHNS:linear_mixing}. $\hfill\square$

\section{Numerical Experiments}\label{sec:numerical_experiments}
In this section, we apply the proposed numerical algorithm to both closed and open systems, which include a spinodal decomposition, two merging droplets, flows in micro structure, and flows in Berea sandstone. We also report the computational performance and scalability results. 
\subsection{Spinodal decomposition}\label{sec:numerical_experiments:spinodal}
The spinodal decomposition is a widely used benchmark problem for modeling the transition of two phases from a thermodynamical unstable initial condition to an equilibrium condition. The system is closed $\partial\Omega = \partial\Omega^\mathrm{wall}$ and throughout the evolution of the decomposition, the global mass is preserved.
\par
The computational domain is a toroidal shape pipe, with the following definition:
\begin{equation*}
\Omega = \left\{(x,y,z)\in(0,\,1)^3:~ \left(\sqrt{(x-0.5)^2 + (y-0.5)^2} - 0.35\right)^2 + (z-0.5)^2 < 0.15^2\right\}.
\end{equation*}
The time step size is $\tau = 10^{-3}$ and the mesh resolution (the edge length of cubic elements) is $h_e=1/128$.
The initial velocity field is taken to be $\vec{v}^{0} = \vec{0}$. The initial order parameter field is generated by sampling numbers from a discrete uniform distribution, as follows:
\begin{equation*}
\on{c^{0}}{E_{k}} \sim \mathcal{U}{\{-1,+1\}}.
\end{equation*}
We will show the impact of the flux and slope limiters as well as the impact of the wettability of the solid wall. For all simulations,  we  set the same initial order parameter $c^0$. Piecewise linear approximations are used and the other parameters for these simulations are:
\[
\rho_\mathrm{A} = 1200,\quad \rho_\mathrm{B} = 800,\quad \Rey = 1,\quad \Ca = 10^{-1},\quad \Pe = 1,\quad \Cn = h_e.
\]
The penalty values for the forms are: $\sigma=2$ for $a_\mathrm{diff}$ and $\sigma = 8$ for $a_\mathrm{ellip}$.  
In the remainder of the paper, tolerances $\epsilon_1 = \epsilon_2 = 10^{-7}$ are chosen for the flux limiting step.
%
%
\par
\Cref{fig:numerical_experiments:SD_1} displays snapshots of the order parameter field at different time steps and for two different values of contact angle.  We refer to phase $\mathrm{A}$ the bulk phase with order parameter $c_h^n = +1$ and phase $\mathrm{B}$ the bulk phase with order parameter $c_h^n=-1$. The center of the interface, i.\,e. the set of points for which $c_h^n=0$, is displayed in green; phase $\mathrm{A}$ is in red and phase $\mathrm{B}$ is in transparent blue. The meaning of these colors is fixed throughout the rest of the paper.
The case $\theta = 90^\circ$ corresponds to a neutral wall for both phases whereas the case $\theta = 180^\circ$ corresponds to a super-hydrophobic wall for phase $\mathrm{A}$. The top row of snapshots in \Cref{fig:numerical_experiments:SD_1} shows that in the case of neutral wall, each of the two phases occupies three disjoint sections of the domain. The interfaces are perpendicular to the solid wall. The bottom row shows that the mixture evolution is different for the super-hydrophobic wall. In this case, phase $\mathrm{A}$ is completely repelled from the walls and occupies  a donut-like shape region of the domain. The results shown in \Cref{fig:numerical_experiments:SD_1} were obtained with our proposed numerical scheme, that includes flux and slope limiting.
\par
To show the effects of these limiting techniques, we now compare the approximations of the order parameter with the limiting turned off.
This means that Step 2 of the discrete scheme in \Cref{sec:discretescheme} is skipped.  The solution is extracted from the middle plane $\{(x,y,z)\in(0,\,1)^3:~ z=0.5\}$ and it is displayed in \Cref{fig:numerical_experiments:SD_2} at different time steps.
We employ a rainbow color scale which maps the values in $[-1,\,+1]$ from blue to red and we employ the color black for values of the order parameter that are outside the interval $[-1,\,+1]$.  In other words, the black regions are the regions where the discrete order parameter is not bound-preserving and exhibits overshoot and undershoot. The first row of snapshots corresponds to the neutral wall case $(\theta=90^\circ)$ with limiting whereas the second row corresponds to the neutral wall case without limiting.  We observe that the dynamics of the decomposition are similar with or without limiting; however, overshoots and undershoots are eliminated when flux and slope limiting are used. Similar conclusions can be made for the case of super-hydrophobic walls (third and fourth rows).  The third row shows the solution with our scheme whereas the fourth row shows the solution without limiting.
\par
Finally, since the system is closed, we can numerically verify that the mass conservation property is satisfied. 
\Cref{fig:numerical_experiments:SD_3} displays the average of the order parameter, $\bar{c}_h^n$, and the average of the density, $\bar{\rho}_h^n$.  We observe that the mass is globally conserved throughout the simulations, for the case of neutral and super-hydrophobic walls, with or without limiting.
\begin{figure}[ht!]
\centering
\includegraphics[width=\textwidth]{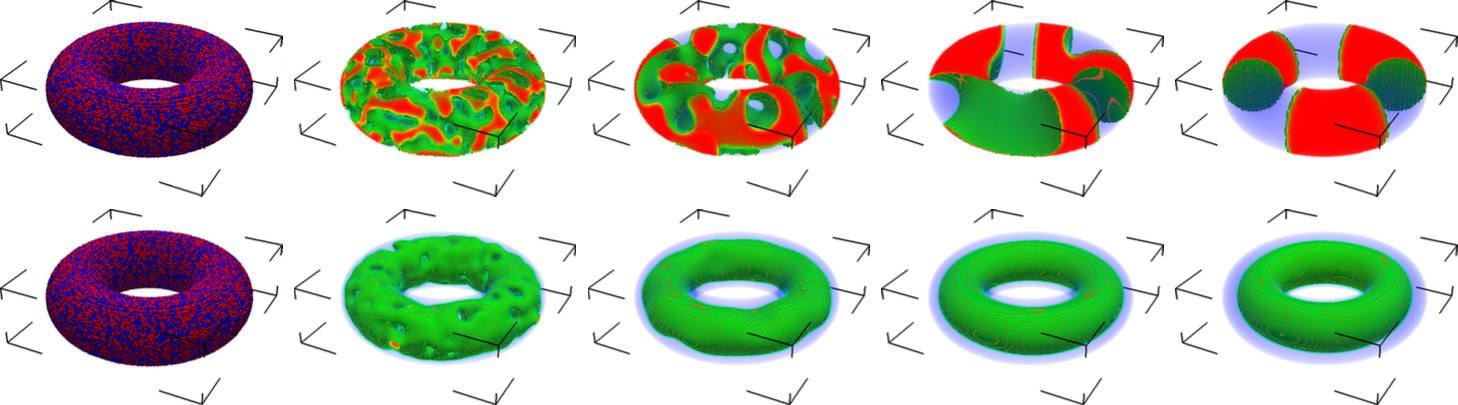}
\caption{3D views of the evolution of the order parameter field. Selected snapshots for initial data and simulation results at time step $2^{4}$, $2^{7}$, $2^{10}$, and $2^{13}$. The center of the diffusive interface ($c_h^n=0$) is colored in green. The top row corresponds
to neutral wall (contact angle $\theta = 90^\circ$)  and the bottom row to super-hydrophobic wall ($\theta = 180^\circ$).}
\label{fig:numerical_experiments:SD_1}
\end{figure}
\begin{figure}[ht!]
\centering
\includegraphics[width=\textwidth]{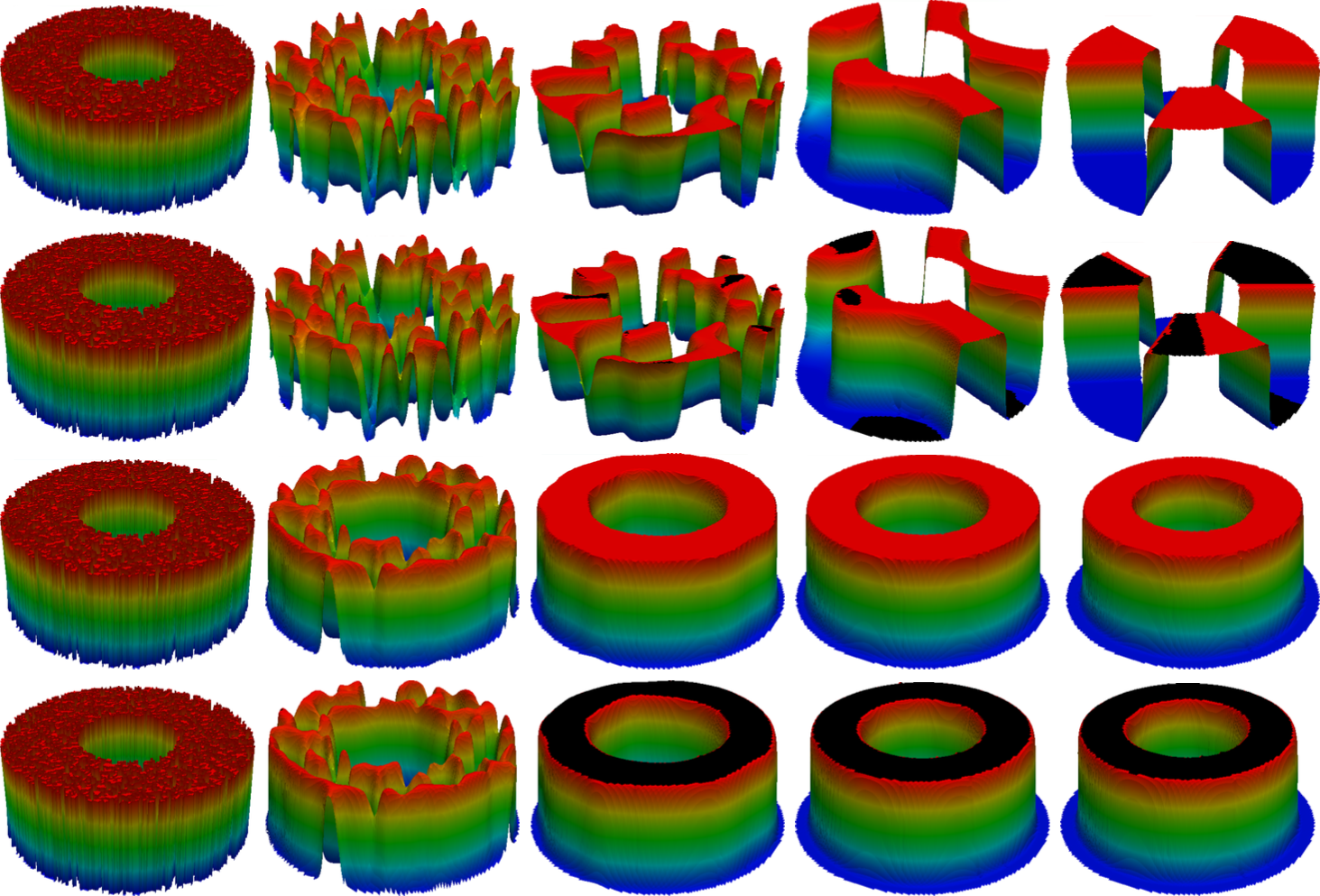}
\caption{Plots of the order parameter field extracted from the plane $\{z=0.5\}$. Selected snapshots for initial data and simulation results at time step $2^{4}$, $2^{7}$, $2^{10}$, and $2^{13}$. Values ourside the interval $[-1,\,+1]$ are displayed in black. From top row to bottom row: contact angle $\theta=90^\circ$ with our numerical scheme; contact angle $\theta=90^\circ$ without limiters; contact angle $\theta=180^\circ$ with our numerical scheme; contact angle $\theta=180^\circ$ without limiters.}
\label{fig:numerical_experiments:SD_2}
\end{figure}
\begin{figure}[ht!]
\centering
\includegraphics[width=\textwidth]{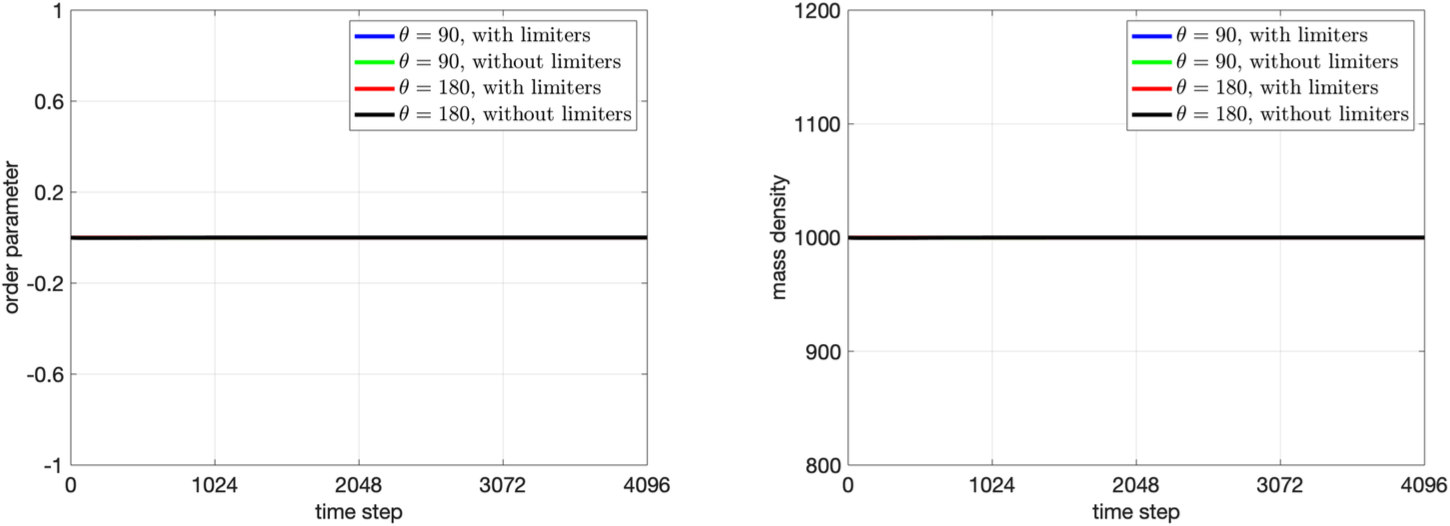}
\caption{Spinodal decomposition example. Left figure: average of order parameter as a function of time. Right figure: average of mass density as a function of time.}
\label{fig:numerical_experiments:SD_3}
\end{figure}

\subsection{Merging droplets}\label{sec:numerical_experiments:merge_drop}
For this second example, the computational domain is the unit cube $\Omega = (0,1)^3$ and the system is closed, $\partial\Omega = \partial\Omega^\mathrm{wall}$. Two droplets of phase $\mathrm{A}$ are initially in a non-equilibrium configuration, surrounded by phase $\mathrm{B}$, and as  time evolves, they merge
into one larger droplet.
During this process, the large droplet wobbles several times and eventually the two droplets evolve into the most thermodynamically favorable configuration, namely, a single spherical droplet \cite{bao2012finite,magaletti2013sharp}. In addition, as a closed system, mass is conserved throughout the whole dynamic evolution.
\par
The initial velocity field is taken to be $\vec{v}^0 = \vec{0}$. Meanwhile, the initial order parameter field is prescribed by the following formula:
\begin{align*}
c^{0}(x,y,z) = 
\max\Big\{-1,\,
&\tanh{\Big(\frac{0.25 - \norm{\transpose{[x, y, z]}-\transpose{[0.35, 0.35, 0.35]}}{}}{\sqrt{2}\,\Cn}\Big)},\\
&\tanh{\Big(\frac{0.25 - \norm{\transpose{[x, y, z]}-\transpose{[0.65, 0.65, 0.65]}}{}}{\sqrt{2}\,\Cn}\Big)}\Big\},
\end{align*}
where $\norm{\cdot}{}$ denotes the Euclidian norm. 
\par 
The discrete space is the space of piecewise quadratic polynomials ($r=2$), the mesh resolution is $h_e=1/64$ and the time step is $\tau = 10^{-4}$. The wall is assumed to be neutral ($\theta=90^\circ$). 
The penalty values for the forms are: $\sigma=4$ for $a_\mathrm{diff}$ and $\sigma = 32$ for $a_\mathrm{ellip}$. For the other parameters, we choose
\[
\rho_\mathrm{A} = 1200,\quad \rho_\mathrm{B} = 800,\quad \Rey = 1,\quad \Ca = 10^{-4},\quad \Pe = 1,\quad \Cn = h_e.
\]
\par
\Cref{fig:numerical_experiments:drop_1} displays snapshots of the order parameter field as well as its  value along the diagonal of the computational domain, i.\,e., along the line
$\{(x,y,z) \in (0,\,1)^3:~ x = y = z\}$.
The snapshots clearly show the merging of the two droplets, the intermediate wobbling stages and finally the equilibrium configuration of the spherical droplet. The plots of the order parameter along the diagonal of the domain, show that the overshoot and undershoot phenomena are not present. This is because of the flux and slope limiting used in our numerical method. Finally, the global mass is conserved, as shown in \Cref{fig:numerical_experiments:drop_2}.
\begin{figure}[ht!]
\centering
\includegraphics[width=\textwidth]{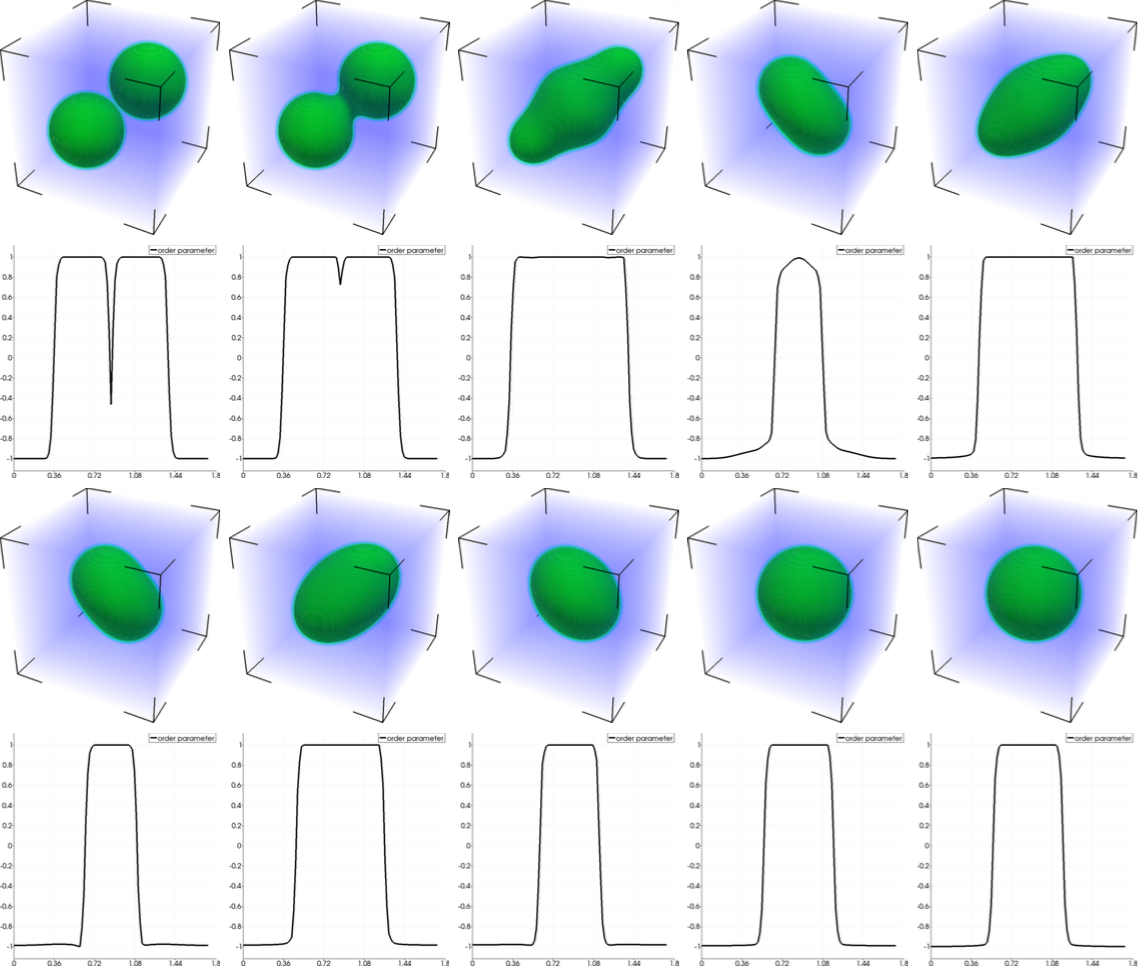}
\caption{3D views of the evolution of order parameter field (first and third rows) and plots of order parameter extracted along the line $\{x = y = z\}$ (second and fourth rows). Selected snapshots at time step $0$, $4$, $22$, $46$, $78$, $112$, $144$, $180$, $500$, and $1000$. The center of the diffusive interface ($c_h^n=0$) is colored in green.}
\label{fig:numerical_experiments:drop_1}
\end{figure}
\begin{figure}[ht!]
\centering
\includegraphics[width=\textwidth]{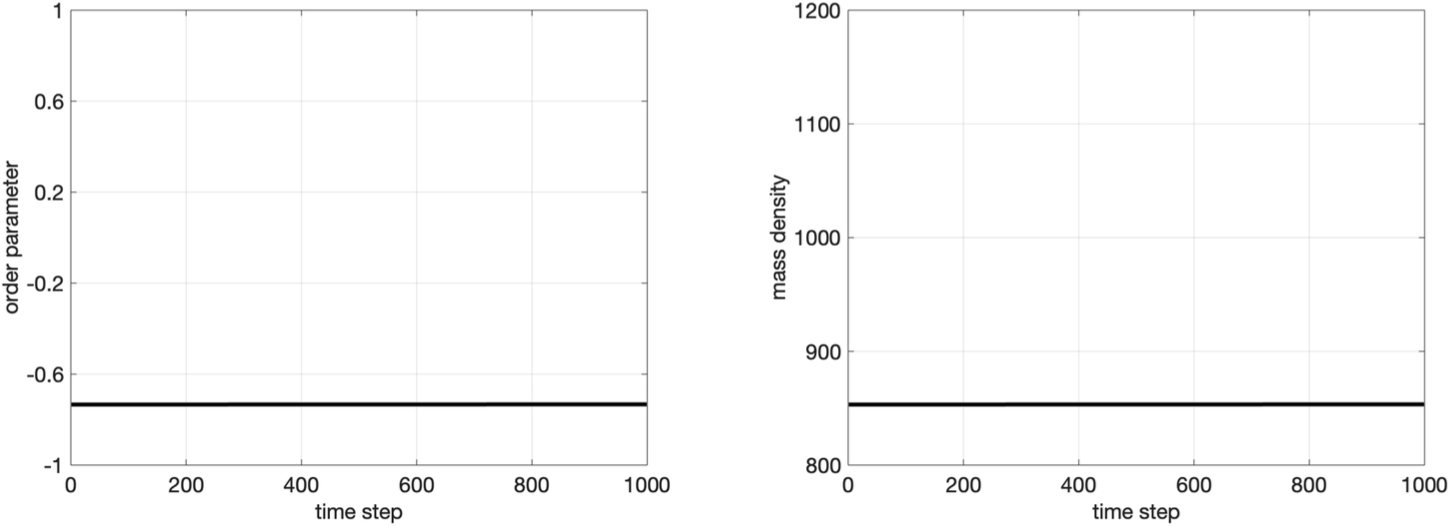}
\caption{Merging droplets example. Left figure: average of order parameter as a function of time. Right figure: average of mass density as a function of time.}
\label{fig:numerical_experiments:drop_2}
\end{figure}

\subsection{Micro structure simulations}
Micro structures are engineered porous media that are commonly used in microfluidic devices. The micro structures are made of connected cavities that are repeated according to a given pattern, see \Cref{fig:numerical_experiments:micro_geometry}. 
Because they are good substitutes for real porous media, micro structures are widely used by scientists to analyze complex behavior of fluid dynamics at the pore scale.
\begin{figure}[ht!]
\centering
\includegraphics[width=0.38\textwidth]{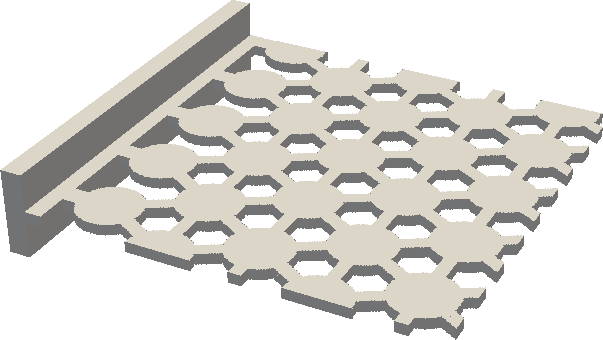}
\caption{The micro structure domain attached to a buffer region on the left (inflow boundary).}
\label{fig:numerical_experiments:micro_geometry}
\end{figure}
\par
In this numerical example, we study the flow of two phases in the micro structure given in \Cref{fig:numerical_experiments:micro_geometry}. The figure shows the microstructure ($380\times 400 \times 10$ cubic elements) and a buffer region added to the left side of the microstructure.
In this open system, the inflow boundary is the face $\{x=0\}$ of the buffer region and the outflow boundary is the right side of the micro structure. 
The pore space is initially filled with phase $\mathrm{B}$.  The initial velocity field is taken to be $\vec{v}^0 = \vec{0}$ and the velocity on inflow boundary is defined by
\begin{equation*}
\vec{v}_\mathrm{D}(x,y,z) = 400 y\left(y-1\right)\left(z-\frac{2}{5}\right)\left(z-\frac{3}{5}\right), \quad (x,y,z)\in\partial\Omega^\mathrm{in}.
\end{equation*} 
\par
The discrete space is the space of piecewise linears ($r=1$), the mesh resolution is $h_e = 1/400$ and the time step size is $\tau = 5\times 10^{-4}$. The wall is hydrophobic with respect to phase $\mathrm{A}$ (contact angle $\theta=120^\circ$). 
The other parameters for the simulations are
\[
\rho_\mathrm{A} = 1200,\quad \rho_\mathrm{B} = 800,\quad \Rey = 1,\quad \Ca = 1,\quad \Pe = 50,\quad \Cn = h_e.
\]
The penalty parameters are: 
$\sigma = 4$ for the forms $a_\mathrm{diff}, a_\mathrm{diff,\partial\Omega^\mathrm{in}}$, 
$\sigma = 8$ for all interior and outflow faces for the form $a_\mathrm{diff,\partial\Omega^\mathrm{out}}$, 
$\sigma = 8$ (resp. $32$) for all interior (resp. inflow boundary) faces of $a_\mathrm{ellip}$, 
and $\sigma = 32$ for the form $b_\mathrm{vel}$.  
\par
\Cref{fig:micro_2dview} shows the evolution of the order parameter $c_h^n$ along the plane $\{z=0.5\}$.  We observe that phase $\mathrm{A}$ invades the microstructure while staying away from the solid walls because of the wettability constraint.  
\begin{figure}[ht!]
\centering
\includegraphics[width=\textwidth]{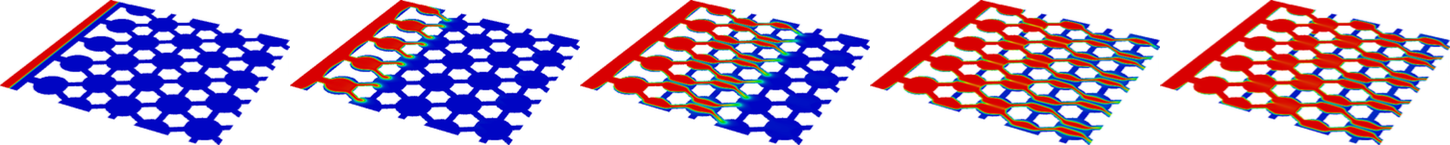}
\caption{Contours of the order parameter along the plane $\{z=0.5\}$ at different time steps:
$0$, $50$, $100$, $150$, and $200$.}
\label{fig:micro_2dview}
\end{figure}
\par
\Cref{fig:numerical_experiments:micro_1} compares the plots of the order parameter $c_h^n(x,y,0.5)$ obtained with and without flux/slope limiting. The top row corresponds to our numerical method whereas the bottom row corresponds to the case of no limiting. The regions where the order parameter violates the bounds $[-1,+1]$ are shown in black. We observe that the flux and slope limiters remove any overshoot
and undershoot phenomena. Note that the dynamics are similar for both cases. 

%
%
\begin{figure}[ht!]
\centering
\includegraphics[width=\textwidth]{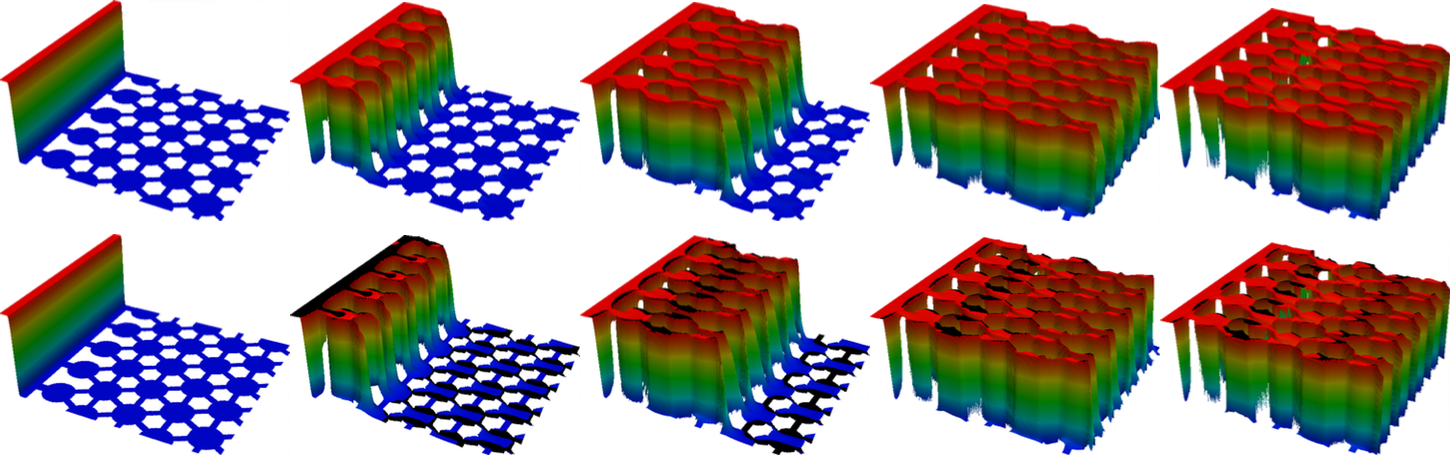}
\caption{Plots of the order parameter $c_h^n(x,y,0.5)$ with limiting (top row) and without limiting (bottom row) for different time steps: $0$, $50$, $100$, $150$, and $200$. The values outside the interval $[-1,\,+1]$ are marked in black.}
\label{fig:numerical_experiments:micro_1}
\end{figure}

\subsection{Berea rock simulations}\label{sec:numerical_experiments:porous}
Digital rock technology employs 3D images of porous rock, that are constructed by X-ray based micro-CT scans of rock samples.
The pore space image is a set of cubic voxels that directly form the computational domain for our numerical method.
\begin{figure}[ht!]
\centering
\includegraphics[width=0.38\textwidth]{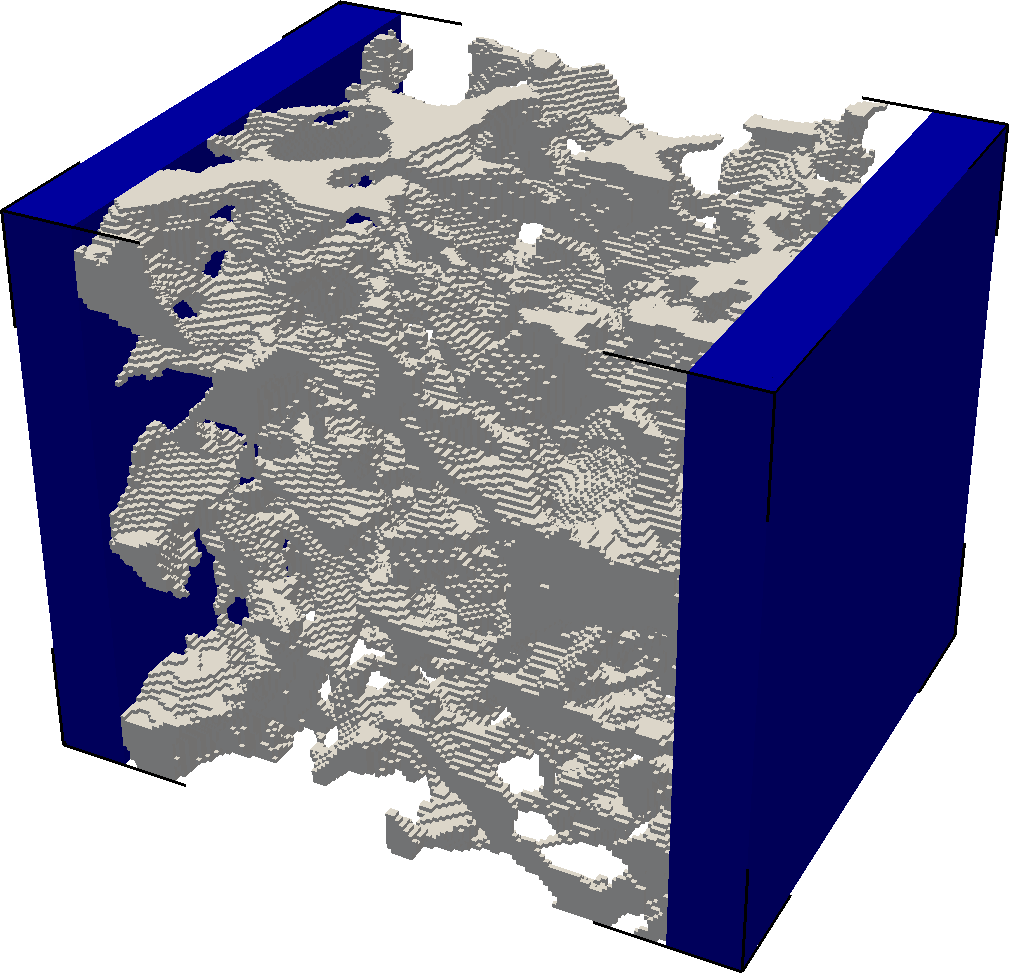} 
\caption{Computational domain for Berea simulations: the gray region is the pore space of the rock sample, blue regions are the buffers at the inlet (left buffer) and outlet (right buffer) boundaries.}
\label{fig:numerical_experiments:berea_1}
\end{figure}
\par
In this example, the pore space is a three-dimensional image of a Berea sandstone with in-situ phase $\mathrm{B}$ (oil). Buffer zones are added at the inlet and outlet faces of the rock sample, see \Cref{fig:numerical_experiments:berea_1}, as is the case in lab experiments. 
The buffer zone at the outlet is also filled with phase $\mathrm{B}$ whereas half of the buffer zone at the inlet is filled with phase $\mathrm{A}$ (water). 
The initial velocity field is taken to be $\vec{v}^0 = \vec{0}$ and the velocity on inflow boundary is defined by 
\begin{equation*}
\vec{v}_\mathrm{D}(x,y,z) 
= \frac{1}{10}\left(x-\frac{1}{10}\right)\left(x-\frac{9}{10}\right)\left(z-\frac{1}{10}\right)\left(z-\frac{9}{10}\right), \quad (x,y,z)\in \partial\Omega^\mathrm{in}.
\end{equation*} 
The space of discontinuous piecewise linears is used and the mesh resolution is $h_e=1/160$. The penalty values are $\sigma=2$ for the interior faces of $a_\mathrm{diff}, a_\mathrm{diff,in}$; $\sigma=100$ on the inflow bounday of $a_\mathrm{diff,in}$; $\sigma=8$ for the form $a_\mathrm{diff,out}$; $\sigma=8$ (resp. $32$) for the interior (resp. boundary) faces of $a_\mathrm{ellip}$ and finally $\sigma = 32$ for $b_\mathrm{vel}$.  We choose for contact angle $\theta = 80^\circ$, which means that the solid faces of the pores are hydrophylic with respect to phase $\mathrm{A}$.  We vary the capillary number 
$\Ca \in \{1, 10^{-1}, \cdots, 10^{-3}\}$ to study the effect of capillary forces on the displacement of the phases.
The other parameters are:
\[
\rho_\mathrm{A} = 1000,\quad \rho_\mathrm{B} = 800,\quad \Rey = 1,\quad \Pe = 1,\quad \Cn = h_e.
\]
\par
\Cref{fig:numerical_experiments:berea_2} displays the order parameter field after the injection of $10$ pore volume (PV), which corresponds to the time $t=3.6$. 
We observe that there are small differences in the propagation of phase $\mathrm{A}$ into the pore space for large capillary numbers, namely for $\Ca = 1$ and $\Ca = 10^{-1}$. For these cases, the dominant forces are the viscous forces.  As $\Ca$ decreases, the phase distribution in the pore space changes drastically, in particular for $\Ca = 10^{-2}$ and $\Ca = 10^{-3}$.
We observe that phase $\mathrm{A}$ occupies many more pores for the case $\Ca = 10^{-2}$ than for the case $\Ca = 10^{-1}$.  This is expected as the local capillary forces drive the dynamics of the flows for small values of capillary number. Finally the case $\Ca = 10^{-3}$ shows that phase $\mathrm{A}$ has invaded pores that remained filled with phase $\mathrm{B}$ in the case $\Ca = 10^{-2}$.  The simulation for $\Ca = 10^{-3}$ also exhibits many more examples of snap-off phenomena, namely break-up of one drop of phase $\mathrm{A}$ into several droplets, as phase $\mathrm{A}$ passes through pores and throats.
\begin{figure}[ht!]
\centering
\includegraphics[width=\textwidth]{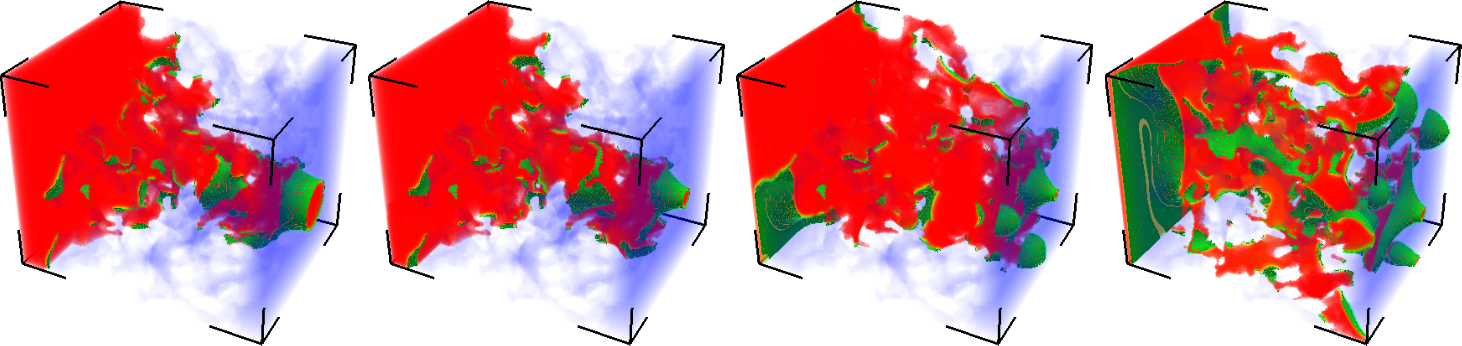}
\caption{3D snapshots of the order parameter field after injection of $10$ PV of phase $\mathrm{A}$ (in red), for different capillary numbers:
from left to right, $\Ca = 1, 10^{-1}, 10^{-2}, 10^{-3}$.  The center of the diffuse interface ($c_h=0$) is colored in green.}
\label{fig:numerical_experiments:berea_2}
\end{figure}
\par
One important application of pore scale flows is the notion of capillary desaturation, which results from the 
mobilization of the oil by increasing the magnitude of viscous forces or decreasing the magnitude of the 
capillary forces \cite{Lake1989,Hilfer2015}.
We have obtained the capillary desaturation curve with our numerical scheme.
The Berea rock sample is initially filled with oil and then flooded by water. Up to $40$ PV of water has been injected into the sample, 
which is sufficient to establish the residual oil saturation, i.e. the saturation of oil that remains trapped in the sample. 
\Cref{fig:desaturation} shows the residual oil saturation obtained for different flooding numerical experiments, each corresponding
to a different capillary number $\Ca$.  We choose twelve values for the capillary number in the range $[10^{-3}, 1]$. We observe that for capillary number greater than $10^{-2}$, the residual oil saturation
is more or less constant whereas the residual oil saturation jumps to a larger value for capillary numbers less than or equal to $4\times 10^{-3}$.
The jump in the residual oil saturation has been observed in several lab and computational experiments \cite{Lake1989,alpak2019direct,Stegemeier1974,Yeganeh}.
The transition interval $[6\times 10^{-3}, 8\times 10^{-3}]$ indicates that capillary forces dominate viscous forces for $\Ca$ less than or equal to $4\times 10^{-3}$
and that viscous forces are the dominant forces for $\Ca$ greater than equal to $10^{-2}$.

There is an extensive discussion in the literature on the correct definition of the capillary number \cite{ArmstrongBerg}. 
In particular, the correspondence
between the physical microscopic capillary number (measured in lab experiments) to the computational capillary number in phase field models
is unknown.  Capillary forces are known to dominate for physical capillary numbers less than or equal to $10^{-5}$ \cite{Lake1989}.
We show with \cref{fig:desaturation} that the computational capillary number in the phase field model is $100$ times larger
and that capillary forces dominate for computational capillary number of the order $10^{-3}$.  This shows that the capillary number
used in our model is closer to the macrocospic capillary number defined in \cite{HilfeOren,ArmstrongBerg}.

\begin{figure}[ht!]
\centering
\includegraphics[width=0.6\textwidth]{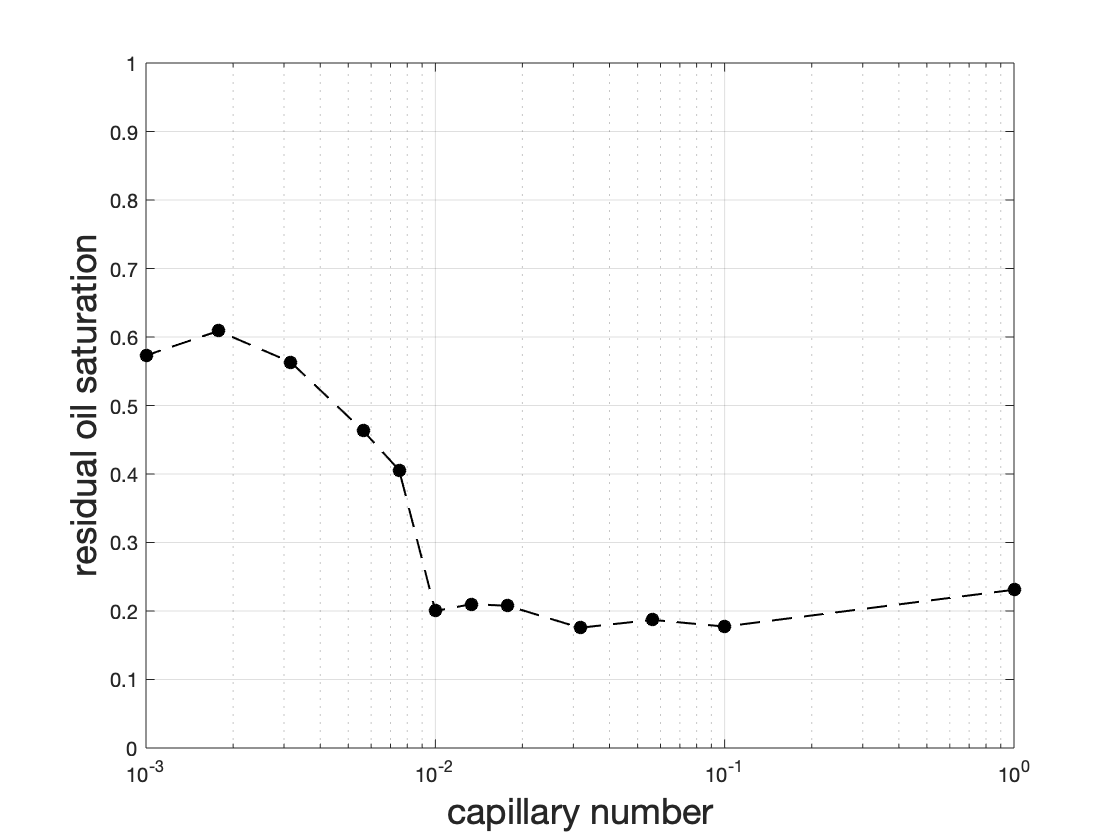} 
\caption{Desaturation curve obtained with the phase-field model. Residual oil saturation is plotted as a function of the capillary number.}
\label{fig:desaturation}
\end{figure}

\subsection{Performance and scalability}
In order to evaluate the computational performance of our simulations, we perform a strong scalability study for the pore-scale flow simulation from \Cref{sec:numerical_experiments:porous} with a capillary number of $\Ca = 10^{-2}$.
The number of degrees of freedom is about 3.8 million for both the Cahn--Hilliard equation and the elliptic equation in the pressure correction step, and it is about 11.5 million for the momentum balance equation.
We compute the first ten time steps of this simulation using one, two, four, or eight compute nodes of the Bridges cluster at the Pittsburgh Supercomputing Center~\cite{Nystrom}.
Each compute node is equipped with two Intel Xeon E5-2695 v3 CPUs ($2\times 14$ CPU cores per node) and 128~GB of memory.
\par
Our implementation is based on C++ and the Trilinos framework~\cite{Heroux2005}, which supports hybrid parallelism with MPI and OpenMP.
For our experiments, we run ten MPI processes per compute node and two OpenMP threads per MPI process, i.e., we utilize 20 of the 28 CPU cores on each compute node.
Since the node-level performance of our simulations is limited by memory bandwidth rather than by floating point operations per second (FLOPS), using all 28 CPU cores per node does not result in improved performance and can in fact harm performance.
\par
The linear systems in the Cahn--Hilliard steps are solved with a Jacobi preconditioned restarted GMRES method, and the linear systems in the momentum balance step are solved with a Jacobi-preconditioned BiCGStab method.
While the Jacobi method is a simple preconditioner, hybrid-parallel implementations of more effective preconditioners for these problems are not readily available.
We also emphasize that since the nonlinear Cahn--Hilliard equation is solved using an inexact Newton method~\cite{FLAR2018finite,Thiele2017}, much of the computational cost of the Cahn--Hilliard steps is accounted for by the linear system assembly in each Newton iteration rather than by the linear solver.
Finally, the linear systems from the pressure correction step are solved using a conjugate gradient method with a combined $p$-multigrid and algebraic multigrid (AMG) preconditioner (see, e.g.,~\cite{Helenbrook2003,Tamstorf2015}).
Specifically, a two-level $p$-multigrid method reduces the original problem, which is obtained from a piecewise linear discretization, to a problem associated with a piecewise constant discretization to which an AMG V-cycle can be applied.
Efficient preconditioning of the pressure correction systems is critical for the computational performance of our simulations.
If a simple Jacobi preconditioner is used, the solution of pressure correction systems accounts for more than 50\% of the total computational cost of large simulations~\cite{Thiele2019}.
Our $p$-multigrid implementation is available as open source software~\cite{Thiele2019cphis}.
\par
\Cref{tab:scalability} shows the results of the performance and scalability study.
We see that for this particular simulation, the Cahn--Hilliard steps accounts for the majority of the computational cost, while the cost of the Navier--Stokes steps is lower.
Computational efficiency, which we define as the ratio of observed and ideal speedup, is at least 87\% in all cases and closer to 100\% when using two or four compute nodes.
As a result, we observe a maximum speedup of $6.92$ when using eight nodes, allowing the simulation of an entire time step in $19.7$ seconds.

\begin{table}
\centering 
\caption{Computational performance and strong scalability. The table shows the computational cost of Cahn--Hilliard (CH) and Navier--Stokes (NS) updates separately and combined. Speedups and efficiencies are computed based on the total times. The times shown are the average elapsed time per simulated time step, where the average was taken over ten time steps and three repetitions of the experiment.}
\label{tab:scalability}
\begin{tabular}{rrrrrrr}
\toprule
\multicolumn{1}{c}{\textbf{\#Nodes}} & \multicolumn{1}{c}{\textbf{\#Cores}} & \multicolumn{3}{c}{\textbf{Time [s]}} & \multicolumn{1}{c}{\textbf{Speedup}} & \multicolumn{1}{c}{\textbf{Efficiency [\%]}}\\
\cline{3-5} && \multicolumn{1}{c}{\textbf{CH}} & \multicolumn{1}{c}{\textbf{NS}} & \multicolumn{1}{c}{\textbf{Total}}\\
\midrule
    1 &  20 & 90.6 & 45.9 & 136.4 & 1.00 & 100\\
    2 &  40 & 44.4 & 23.2 &  67.7 & 2.01 & 101\\
    4 &  80 & 23.4 & 11.9 &  35.4 & 3.85 &  96\\
    8 & 160 & 12.7 &  7.0 &  19.7 & 6.92 &  87\\
\bottomrule
\end{tabular}
\end{table}

\section{Conclusion}\label{sec:conclusion}
In this paper, we formulate an efficient numerical algorithm for modeling two-phase flows with varying phase density at the pore scale. The method employs (1) discontinuous polynomial approximations on cubic voxel sets; (2) a Poisson problem for the pressure-correction step that is suitable
for variable densities; and (3) flux and slope limiters that eliminate the overshoot and undershoot in the order parameter field.
Numerical results show that the proposed scheme conserves mass for closed systems. Simulations of open two-phase systems in micro structures
and in real rocks illustrate the robustness of the method.  The desaturation curve obtained with the Berea sandstone simulations indicates that capillary forces dominate the viscous forces for small values of capillary numbers.
Finally strong scalability
results show a very high computational efficiency of our method on hybrid compute nodes.  

\section*{Acknowledgments}
The authors thank Dr. Hengjie Wang (University of California, Irvine) and Dr. Florian Frank (Friedrich-Alexander-Universit\"at Erlangen-N\"urnberg) for helpful discussions. This work used the Extreme Science and Engineering Discovery Environment (XSEDE), which is supported by grants TG-DMS 190021. Specifically, it used the Bridges system, which is supported by NSF award number ACI-1445606, at the Pittsburgh Supercomputing Center (PSC).

\printbibliography

\end{document}